\let\my@abstract=\relax
\def\abstract#1{%
  \def\my@abstract{%
    \normalfont\Small
    \list{}{\labelwidth\z@
      \leftmargin3pc \rightmargin\leftmargin
      \listparindent\normalparindent \itemindent\z@
      \parsep\z@ \@plus\p@
      \let\fullwidthdisplay\relax
    }%
    \item[\hskip\labelsep\scshape\abstractname.]%
    #1
  \endlist}}
\def\@setabstracta{%
  \ifx\my@abstract\relax
  \else
    \skip@20\p@ \advance\skip@-\lastskip
    \advance\skip@-\baselineskip \vskip\skip@
  \my@abstract
    \prevdepth\z@ % because \abstractbox is a vtop
  \fi
}
\newtheorem{theorem}{Theorem}[section]
\newtheorem{lemma}[theorem]{Lemma}
\theoremstyle{definition}
\newtheorem{rem}[theorem]{Remark}
\theoremstyle{remark}
\numberwithin{equation}{section}
\newcommand{\Z}{\mathbb Z}
\newcommand{\Q}{\mathbb Q}
\newcommand{\R}{\mathbb R}
\newcommand{\C}{\mathbb C}
\begin{document}

%\linenumbers

\title[Witt equivalence of function fields]{Witt equivalence of function fields of curves over local fields}

\author{Pawe\l \ G\l adki}

\address{Institute of Mathematics,
University of Silesia, \newline \indent
ul. Bankowa 14, 40-007 Katowice, Poland \newline \indent
\and \newline \indent
Department of Computer Science,
AGH University of Science and Technology, \newline \indent
al. Mickiewicza 30, 30-059 Krak\'ow, Poland}
\email{pawel.gladki@us.edu.pl}

\author{\fbox{Murray Marshall}}

\thanks{Murray Marshall passed away in May 2015. Our community lost a brilliant mathematician and a wonderful man. We sorely miss him.}

%\date{October 26, 2010}

\subjclass[2000]{Primary 11E81, 12J20 Secondary 11E04, 11E12}
\keywords{ symmetric bilinear forms, quadratic forms, Witt equivalence of fields, function fields, local fields, valuations, Abhyankar valuations}

\abstract{Two fields are Witt equivalent if their Witt rings of symmetric bilinear forms are isomorphic. Witt equivalent fields can be understood to be fields having the same quadratic form theory. The behavior of finite fields, local fields, global fields, as well as function fields of curves defined over archimedean local fields under Witt equivalence is well-understood. Numbers of classes of Witt equivalent fields with finite numbers of square classes are also known in some cases. Witt equivalence of general function fields over global fields was studied in the earlier work \cite{gm2016} by the authors, and applied to study Witt equivalence of function fields of curves over global fields. In this paper we extend these results to local case, i.e. we discuss Witt equivalence of function fields of curves over local fields. As an application, we show that, modulo some additional assumptions, Witt equivalence of two such function fields implies Witt equivalence of underlying local fields.}

\maketitle

\section{Introduction}

One of the fundamental problems in bilinear algebra is to classify fields with respect to Witt equivalence, that is equivalence defined by isomorphism of their Witt rings of symmetric bilinear forms (which also includes fields of characteristic two). The classification problem  turns out to be manageable only when restricted to some specific classes of fields and, in fact, is completely resolved only in a few rather special cases. Trivial examples include quadratically closed fields, which are all Witt equivalent, their Witt ring being just $\Z/2\Z$, and real closed fields, which, again, are all Witt equivalent, their Witt ring being $\Z$. It is also not hard to see that finite fields of characteristic different from two are Witt equivalent if and only if they are of the same level, and that there are only two kinds of nonisomorphic Witt rings for this class of fields: $\Z/4\Z$ for fields with the number of elements congruent to $3 (\operatorname{mod} 4)$, and the group ring $\Z/2\Z[F^*/{F^*}^2]$ for the remaining finite fields $F$, $\operatorname{char} F \neq 2$. 

A slightly more involved, but still to be found in graduate-level textbooks, is the case of local fields, that is complete discrete valued fields with finite residue fields. Recall that archimedean local fields are either $\R$ or $\C$, non-archimedean local fields of characterictic 0 are finite extensions of $\Q_p$ (which include the dyadic case of extensions of $\Q_2$), and non-archimedean local fields of characterictic $p$ are finite extensions of $\mathbb{F}_q((x))$, $q = p^n$, for some $n \in \mathbb{N}$. Each non-archimedean nondyadic local field $F$ is Witt equivalent to either $\Q_3$, or to $\Q_5$, depending on whether the number of elements of its residue field is congruent to $3 (\operatorname{mod} 4)$, or to $1 (\operatorname{mod} 4)$. For a dyadic local field $F$, that is an extension of $\Q_2$ of degree $n$, if $n$ is odd, then the Witt equivalence class of $F$ depends only on $n$, and if $n$ is even, then there are exactly two Witt equivalence classes, one with $\sqrt{-1} \in F$, and one with $\sqrt{-1} \notin F$. These results are to be found, for example, in Lam's classical monograph \cite{l}. 

Witt equivalence preserves the order of the square class group of the field, and so classification of fields up to Witt equivalence reduces to classification of fields with a given cardinality of the group of square classes. Classification of fields having only finitely many square classes was in the scope of interest of a large part of the quadratic forms community in the late 1970s and early 1980s. Denote by $q$ the number of square classes of a field $F$, and by $w(q)$ the number of classes of Witt equivalent fields of characteristic $\neq 2$ with $q$ square classes. The exact values of $w(q)$ are known only for $q \leq 32$, were computed in a series of papers by Carson and Marshall \cite{cm}, Cordes \cite{cor73}, Kula \cite{kula81}, Kula, Szczepanik and Szymiczek \cite{kss}, Szczepanik \cite{szcz85}, and Szymiczek \cite{szym75}, and are summarized in the following table:
$$\begin{array}{|c||c|c|c|c|c|c|}
\hline
q & 1 & 2 & 4 & 8 & 16 & 32 \\
\hline \hline
w(q) & 1 & 3 & 6 & 17 & 51 & 155 \\
\hline
\end{array}$$

The classification of global fields with respect to Witt equivalence proved to be significantly more challenging than the classification of local fields. Recall that a global field is either a number field, i.e. a finite extension of $\Q$, or a function field in one variable over a finite field. Since completions of global fields at their primes are local fields, Witt equivalence of completions of global fields is well-understood. Witt equivalence of global fields was completely resolved by a remarkable local-global principle, whose three different proofs were given by Perlis, Szymiczek, Conner, Litherland \cite{Petal}, and Szymiczek \cite{Sz1}, \cite{szym97}, which states that two global fields of characteristic $\neq 2$ are Witt equivalent if and only if their primes can be paired so that corresponding completions are Witt equivalent. Moreover, Baeza and Moresi \cite{bm} showed that any two global fields of characteristic 2 are Witt equivalent, and it is not difficult to see that a global field of characteristic 2 is never Witt equivalent to a global field of characteristic different from 2. As a consequence of the local-global principle, it is also possible to provide a complete list of invariants of Witt equivalence for number fields, as shown by Carpenter \cite{C}.

For fields $K$ and $k$, recall that $K$ is a function field over $k$, if $K$ is a finitely generated field extension of $k$. If $\operatorname{trdeg}(K:k)=n$ we say $K$ is a  function field in $n$ variables  over $k$. The field of constants of $K$ over $k$ is the algebraic closure of $k$ in $K$; it is a finite extension of $k$ \cite[Chapter 10, Proposition 3]{sl}, so that, in general, we do not require that $k$ is the field of constants of $K$ over $k$. Two more classes of fields where Witt equivalence is well-understood are function fields in one variable over algebraically closed fields of characteristic $\neq 2$, and function fields in one variable over real closed fields. In the former case, it is implied by an old result by Tsen \cite{tsen1933}, that two function fields are Witt equivalent if and only if their underlying algebraically closed fields are of the same cardinality, whilst in the latter one, necessary and sufficient conditions for Witt equivalence were given by Koprowski \cite{kop2} and Grenier-Boley and Hoffmann (with appendix by Scheiderer) \cite{gh}.

Given the above historical background, it is natural to ask for criteria of Witt equivalence of function fields over global and local fields. This question was first addressed by Koprowski in \cite{kop1}. Unfortunately, there appears to be a serious error in the proof of the main theorem in \cite{kop1}. 

The authors of the present paper considered Witt equivalence of function fields over global fields in their earlier work \cite{gm2016}. In the spirit of the local-global principle by Perlis, Szymiczek, Conner, and Litherland, that established a bijection between valuations of two Witt equivalent global fields of characteristic $\neq 2$, the authors managed to show that Witt equivalence of two function fields over global fields induces in a canonical way a bijection $v \leftrightarrow w$ between Abhyankar valuations $v$ of $K$ having residue field not finite of characteristic 2 and Abhyankar valuations $w$ of $L$ having residue field not finite of characteristic 2 (see \cite[Theorem 7.5]{gm2016}). The main tool for setting up this bijection was a method of constructing valuations described in \cite{aej}, which is based, in turn, on earlier constructions, of a similar sort, described in \cite{j0} and \cite{wa}. These results were then applied to study Witt equivalence of function fields of curves defined over global fields, that is function fields in one variable over global fields; see \cite[Corollary 8.1 and Corollary 8.2]{gm2016}.

In the present work the authors extend the results on curves of \cite{gm2016} to function fields of curves defined over local fields. The main result of this article is Theorem \ref{local one variable}, which states that Witt equivalence of two function fields in one variable over local fields of characteristic $\neq 2$  induces a canonical bijection between certain subsets of Abhyankar valuations of the corresponding fields. Contrary to the intuition that one might have developed based on the necessary and sufficient conditions for Witt equivalence of local and global fields, the case of function fields of curves over local fields (Theorem \ref{local one variable}) is in no way easier to settle than the case of function fields of curves over global fields (\cite[Corollary 8.1]{gm2016}). Theorem \ref{local one variable} is then applied to show that, under certain assumptions, Witt equivalence of two function fields of curves over local fields $k$ and $\ell$ implies Witt equivalence of $k$ and $\ell$. This extends \cite[Corollary 8.2]{gm2016} to the local case.

The main new results of the paper are found in Section 3. Throughout the entire exposition the authors make extensive use of the notion of hyperfields, which seem to provide a natural and convenient language to study Witt equivalence. In Section 2 we recall basic terminology, establish fundamental connections between hyperfields and valuations, apply the results of \cite{aej} to understand the behavior of valuations under Witt equivalence, recall the terminology of function fields and Abhyankar valuations, as well as of nominal transcendence degree. All of this is a summary of the material presented in Sections 2--6 of \cite{gm2016}, and we refer to our earlier paper as far as background and most of the details are concerned.

\section{Background and preliminaries}

The main new tool used to study Witt equivalence here are hyperfields. They are objects like fields, but with addition allowed to be multivalued, and were introduced in 1956 by Krasner \cite{kr0} and used for the approximation of valued fields. For the decades that followed, structures with multivalued addition have been better known to computer scientists, due to their applications to fuzzy logic, automata, cryptography, coding theory and hypergraphs (see \cite{cors03}, \cite{dav07} and \cite{zies06}), as well as, to some extent, to mathematicians with expertise in harmonic analysis (see \cite{litv11}). 

Recently, the hyperstructure theory has witnessed a certain revival in connection with various fields: in a series of papers by Connes and Consani \cite{cc-1}, \cite{cc-2}, \cite{cc-3}, with applications to number theory, incidence geometry, and geometry in characteristic one, in works by Viro \cite{viro-1}, \cite{viro-2}, with applications to tropical geometry, by Izhakian and Rowen \cite{ir} and Izhakian, Knebusch and Rowen \cite{ikr}, with applications to recently introduced algebraic objects such as supertropical algebras, or by Lorscheid \cite{lor-1}, \cite{lor-2} to blueprints -- these are algebraic objects which aim to provide a firm algebraic foundation to tropical geometry.
Jun applied the idea of hyperstructures to generalize the definition of valuations and developed the basic notions of algebraic geometry over hyperrings \cite{jun-1}, \cite{jun-2}, \cite{jun-3}, and, finally, the second author introduced independently the notion of hyperfields (there called multifields) to study quadratic forms in \cite{M2} -- these ideas were later applied, for example, to develop the theory of orderings of higher level for hyperstructures \cite{gla2010}, \cite{gm2012}.

Hyperfields provide a convenient and very natural way to describe Witt equivalence, as described in detail in \cite{gm2016}. In what follows we shall review the basic concepts and definitions used later in our paper. We point the reader to Sections 2--6 of \cite{gm2016} as far as background and proofs are concerned.

Just as remarked above, a \it hyperfield \rm is a system $(H, +, \cdot, -, 0, 1)$ where $H$ is a set, $+$ is a multivalued binary operation on $H$, i.e., a function from $H\times H$ to the set of all subsets of $H$, $\cdot$ is a binary operation on $H$, $- : H \rightarrow H$ is a function, and $0,1$ are elements of $H$ such that

\begin{enumerate}
\item[(I)] $(H,+,-,0)$ is a {\it canonical hypergroup}, terminology as in Mittas \cite{mi}, i.e. for all $a,b,c \in H$,
\begin{enumerate}
\item[(1)] $c\in a+b$ $\Rightarrow$ $a \in c+(-b)$,
\item[(2)] $a \in b+0$ iff $a=b$,
\item[(3)]$(a+b)+c = a+(b+c)$,
\item[(4)] $a+b= b+a$;
\end{enumerate}
\item[(II)] $(H,\cdot, 1)$ is a commutative monoid, i.e. for all $a,b,c \in H$, 
\begin{enumerate}
\item[(1)] $(ab)c=a(bc)$, 
\item[(2)]$ab=ba$,
\item[(3)] $a1=a$;
\end{enumerate}
\item[(III)] $a0 = 0$ for all $a\in H$;
\item[(IV)] $a(b+c) \subseteq ab+ac$;
\item[(V)] $1\ne 0$ and every non-zero element has a multiplicative inverse.
\end{enumerate}

Hyperfields are made into a category be declaring a morphism from $H_1$ to $H_2$, where $H_1$, $H_2$ are hyperfields, to be a function $\alpha : H_1 \rightarrow H_2$ which satisfies $\alpha(a+b) \subseteq \alpha(a)+\alpha(b)$, $\alpha(ab) = \alpha(a)\alpha(b)$, $\alpha(-a)=-\alpha(a)$, $\alpha(0)=0$, $\alpha(1)=1$.

{\it Quotient hyperfields}, as defined below, will be of constant use in what follows. For a subgroup $T$ of $H^*$, where $H$ is a hyperfield, denote by $H/_m T$ the set of equivalence classes with respect to the equivalence relation $\sim$ on $H$ defined by 
$$a\sim b \mbox{ if and only if } as = bt \mbox{ for some } s,t\in T.$$ 
The operations on $H/_m T$ are the obvious ones induced by the corresponding operations on $H$: denoting by $\overline{a}$ the equivalence class of $a$ set $\overline{a}\overline{b} = \overline{ab}$, $-\overline{a} = \overline{-a}$, $0 = \overline{0}$, $1 = \overline{1}$, and
$$\overline{a} \in \overline{b}+\overline{c} \mbox{ if and only if } as \in bt+cu \mbox{ for some }s,t,u \in T.$$
$(H/_m T, +,\cdot, -, 0,1)$ is then a hyperfield that we shall refer to as quotient hyperfield. The group of non-zero elements of $H/_m T$ is $H^*/T$.

For a hyperfield $H = (H, +,\cdot, -, 0,1)$ the \it prime addition \rm on $H$ is defined by
\begin{linenomath}\[a+'b = \begin{cases} a+b, &\text{ if one of } a,b \text{ is zero }, \\ a+b\cup \{ a,b\}, &\text{ if } a \ne 0, \ b\ne 0,\ b \ne -a, \\ H, &\text{ if } a \ne 0, \ b\ne 0,\ b = -a. \end{cases}\]\end{linenomath}
For any hyperfield $H:=(H, +,\cdot, -, 0,1)$, $H':=(H, +',\cdot, -, 0,1)$ is also a hyperfield \cite[Proposition 2.1]{gm2016}. We shall call $H'$ the \it prime \rm of the hyperfield $H$. One easily verifies that if $T$ is a subgroup of $H^*$ then $H'/_mT = (H/_mT)'$.

Let $K$ be a field. The \it quadratic hyperfield \rm of $K$, denoted $Q(K)$, is defined to be the prime of the hyperfield $K/_m K^{*2}$. The interest in $Q(K)$ stems from its connection to the algebraic theory of quadratic forms. Denote by $W(K)$ the Witt ring of non-degenerate symmetric bilinear forms over $K$; see \cite{l}, \cite{M1} or \cite{w} for the definition in case $\operatorname{char}(K) \ne 2$ and \cite{KR}, \cite{krw} or \cite{mh} for the definition in the general case. A (non-degenerate diagonal) {\it binary form} over $K$ is an ordered pair $\langle \overline{a},\overline{b}\rangle$, $\overline{a}, \overline{b} \in K^*/K^{*2}$. The \it value set \rm of such a form, denoted by  $D_K\langle \overline{a},\overline{b} \rangle$, is the set of non-zero elements of $\overline{a}+\overline{b}$, i.e., $D_K\langle \overline{a},\overline{b}\rangle$ is the image under $K^* \rightarrow K^*/K^{*2}$ of the subset $D_K\langle a,b\rangle$ of $K^*$ defined by 
\begin{linenomath}\[D_K\langle a,b\rangle := \begin{cases} K^*, & \text{ if } -ab \in K^{*2}, \\ \{ z\in K^* : z = ax^2+by^2, x,y \in K\}, &\text{ otherwise.}\end{cases}\]\end{linenomath}
Two binary forms $\langle \overline{a},\overline{b}\rangle$ and $\langle \overline{c},\overline{d}\rangle$
are considered to be \it equivalent, \rm denoted $\langle \overline{a},\overline{b}\rangle \approx \langle \overline{c},\overline{d}\rangle$, if $\overline{c}\in D_K\langle \overline{a},\overline{b}\rangle$ and $\overline{a}\overline{b} = \overline{c}\overline{d}$.

A hyperfield isomorphism $\alpha$ between two quadratic hyperfields $Q(K)$ and $Q(L)$, where $K,L$ are fields, can be viewed as a group isomorphism $\alpha : K^*/K^{*2} \rightarrow L^*/L^{*2}$ such that $\alpha(-\overline{1}) = -\overline{1}$ and \begin{linenomath}\[\alpha(D_K\langle \overline{a},\overline{b}\rangle) = D_L\langle \alpha(\overline{a}),\alpha(\overline{b})\rangle \text{ for all } \overline{a},\overline{b} \in K^*/K^{*2}.\]\end{linenomath}
We say two fields $K$ and $L$ are \it Witt equivalent, \rm denoted $K\sim L$, to indicate that $W(K)$ and  $W(L)$ are isomorphic as rings.
Combining the results in  \cite{bm}, \cite{h}, and \cite{M1} one gets that $K\sim L$ iff $Q(K)$ and $Q(L)$ are isomorphic as hyperfields (for details, see \cite[Proposition 3.2]{gm2016}).

For two hyperfields $H_1, H_2$ a morphism $\iota: H_1 \rightarrow H_2$ induces a morphism $\overline{\iota} : H_1/_m \Delta \rightarrow H_2$ where $\Delta: = \{ x\in H_1^* : \iota(x)=1\}$. The morphism $\iota$ is said to be a \it quotient morphism \rm if $\overline{\iota}$ is an isomorphism, or, equivalently, if $\iota$ is surjective, and 
$$\iota(c) \in \iota(a)+\iota(b) \mbox{ if and only if } cs\in at+bu \mbox{ for some } s,t,u \in \Delta.$$  
A morphism $\iota : H_1 \rightarrow H_2$ is said to be a \it group extension \rm if $\iota$ is injective,
every $x\in H_2^* \backslash \iota(H_1^*)$ is \it rigid \rm, meaning $1+x \subseteq \{ 1,x\}$, and 
$$\iota(1+y)=1+\iota(y), \mbox{ for all } y \in H_1, y\ne -1.$$

For a field $K$ we adopt the standard notation from valuation theory: if $v$ is a valuation on $K$, $\Gamma_v$ denotes the value group, $A_v$ the valuation ring, $M_v$ the maximal ideal, $U_v$ the unit group, $K_v$ the residue field, and $\pi = \pi_v: A_v \rightarrow K_v$ the canonical homomorphism, i.e., $\pi(a) = a+M_v$. We say $v$ is \it discrete rank one \rm if $\Gamma_v = \mathbb{Z}$. 

We will be interested in the subgroup $T= (1+M_v)K^{*2}$ of $K^*$. Consider the canonical group isomorphism $\alpha : U_vK^{*2}/(1+M_v)K^{*2} \rightarrow K_v^*/K_v^{*2}$ induced by 
$$x \in U_v \mapsto \pi(x) \in K_v^*.$$ 
Define $\iota : Q(K_v) \rightarrow K/_m T$ by $\iota(0) = 0$ and $\iota(a) = \alpha^{-1}(a)$ for $a \in K_v^*/K_v^{*2}$. If $v$ is non-trivial, then, by a variant of Springer's Theorem \cite{sp1}, \cite{sp2}, the map $Q(K) \rightarrow K/_mT$ defined by $\overline{x} \mapsto xT$  is a quotient morphism and $\iota$ is a group extension (see \cite[Propositions 4.1, 4.2]{gm2016}). The cokernel of the group embedding $\alpha^{-1} : K_v^*/K_v^{*2} \rightarrow K^*/T$ is equal to $K^*/U_vK^{*2} \cong \Gamma_v/2\Gamma_v$. For this reason we sometimes say that $K/_mT$ is a \it group extension of $Q(K_v)$ by the group \rm $\Gamma_v/2\Gamma_v$. If $v$ is non-trivial and $\operatorname{char}(K_v) \ne 2$, then $K/_m T$ is naturally identified with $Q(\tilde{K}_v)$, where $\tilde{K}_v$ denotes the henselization of $(K,v)$ \cite[Proposition 4.3]{gm2016}.

If $v,v'$ are valuations on $K$ with $v \preceq v'$, i.e. such that $v'$ is a coarsening of $v$, meaning $A_v \subseteq A_{v'}$, then $M_{v'} \subseteq M_v$, and, consequently, $(1+M_{v'})K^{*2} \subseteq (1+M_v)K^{*2}$. Denote by $\overline{v}$ the valuation on $K_{v'}$ induced by $v$, that is
$$\overline{v}(\pi_{v'}(a)) = v(a), \mbox{ for } a\in U_{v'}.$$
The valuations $\overline{v}$ and $v$ have the same residue field. If $v$ and $v'$ are non-trivial and $v'$ is a proper coarsening of $v$, meaning $A_v \subsetneq A_{v'}$, then $K/_m (1+M_v)K^{*2}$ is a group extension of the hyperfield $K_{v'}/_m(1+M_{\overline{v}})K_{v'}^{*2}$ in a natural way,
and the following diagram of hyperfields and hyperfield morphisms is commutative:
\begin{linenomath}\[\xymatrix{
Q(K) \ar[r] & K/_m(1+M_{v'})K^{*2} \ar[r] & K/_m(1+M_v)K^{*2} \\
& Q(K_{v'}) \ar[u] \ar[r] & K_{v'}/_m (1+M_{\overline{v}})K_{v'}^{*2} \ar[u]\\
& & Q(K_v) \ar[u]
}\]\end{linenomath}
Here, the horizontal arrows are quotient morphisms and the vertical arrows are group extensions.

Let $T$ be a subgroup of $K^*$.
Adopting the well-known terminology from the algebraic theory of quadratic forms, we say that $x\in K^*$ is \it $T$-rigid \rm if $T+Tx \subseteq T\cup Tx$, and denoting by 
\begin{linenomath}\[B(T) := \{ x \in K^*: \text{ either } x \text{ or } -x \text{ is not } T\text{-rigid}\}\]\end{linenomath}
we will refer to the elements of $B(T)$ as to the \it $T$-basic \rm elements.
Note that if $x\in K^*$ is $T$-rigid and
$y=tx$, for some $t\in T$, then $y$ is $T$-rigid. Consequently, $B(T)$ is a union of cosets of $T$.
The element $-1$ is not $T$-rigid, because $0\in T-T$, so $\pm T \subseteq B(T)$. If, in fact, $\pm T = B(T)$, and either $-1\in T$ or $T$ is additively closed, we shall say that the subgroup $T$ is {\it exceptional.}
If $H \subseteq K^*$ is a subgroup containing $B(T)$, then there exists a subgroup $\hat{H}$ of $K^*$ such that $H \subseteq \hat{H}$ and $(\hat{H}:H) \le 2$, and a valuation $v$ of $K$ such that $1+M_v \subseteq T$ and $U_v \subseteq \hat{H}$. Moreover, $\hat{H}$ can be taken to be simply $H$, unless $T$ is exceptional  \cite[Theorem 2.16]{aej}. By \cite[Theorem 5.18]{M1} $B(K^{*2})$ is a subgroup of $K^*$, and in the case when $T = (1+M_v)K^{*2}$, for some non-trivial valuation $v$ of $K$, $B(T) \subseteq U_vK^{*2}$ and \begin{linenomath}\[B(T) = \{ x\in K^* : \overline{x} = \iota(\overline{y}) \text{ for some } y \in B(K_v^{*2})\},\]\end{linenomath} where $\iota : Q(K_v) \hookrightarrow K/_mT$ is the morphism described above.  $B(T)$ is a group and the group isomorphism $\iota : K_v^*/K_v^{*2} \rightarrow U_vK^{*2}/T$ induces a group isomorphism $B(K_v^{*2})/K_v^{*2} \rightarrow B(T)/T$. $T$ is exceptional if and only if $K_v^{*2}$ is exceptional. See \cite[Proposition 4.6]{gm2016} for details.

We will make frequent use of the following result:

\begin{theorem}[{\cite[Theorem 5.3]{gm2016}}] \label{main lemma} Suppose $K$, $L$ are fields, $\alpha : Q(K) \rightarrow Q(L)$ is a hyperfield isomorphism and $v$ is a valuation on $K$ such that $\Gamma_v$ is finitely generated as an abelian group. Suppose either
(i) the basic part of $(1+M_v)K^{*2}$ is $U_vK^{*2}$ and $(1+M_v)K^{*2}$ is unexceptional, or
(ii) the basic part of $(1+M_v)K^{*2}$ is $(1+M_v)K^{*2}$ and $(1+M_v)K^{*2}$ has index 2 in $U_vK^{*2}$.
Then there exists a valuation $w$ on $L$ such that the image of $(1+M_v)K^{*2}/K^{*2}$ under $\alpha$ is $(1+M_w)L^{*2}/L^{*2}$ and $(L^*:U_wL^{*2}) \ge (K^*:U_vK^{*2})$. If (i) holds, then
the image of $U_vK^{*2}/K^{*2}$ under $\alpha$ is $U_wL^{*2}/L^{*2}$.
\end{theorem}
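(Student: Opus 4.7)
The plan is to use $\alpha$ to transport the subgroup $T := (1+M_v)K^{*2}$ to a subgroup $S \subseteq L^*$ and then extract $w$ from the Arason--Elman--Jacob construction (Theorem 2.16 of \cite{aej}) applied to $S$. First I would define $S$ by $S/L^{*2} := \alpha(T/K^{*2})$. Since the quotient hyperfield is functorial, $\alpha$ descends to a hyperfield isomorphism $\bar\alpha : K/_mT \to L/_mS$. Because rigidity, the basic part and exceptionality are all hyperfield-theoretic, $\bar\alpha$ carries $B(T)/T$ onto $B(S)/S$ and $T$ is exceptional precisely when $S$ is. Hence in case (i), $B(S)/S \cong K_v^*/K_v^{*2}$ and $S$ is unexceptional, while in case (ii) the index-2 condition $(U_vK^{*2}:T)=2$ transfers to the analogous index-2 feature of $S$ inside $L^*$.

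Next I would apply \cite[Theorem 2.16]{aej} on the $L$-side. In case (i), take $H := B(S)$; unexceptionality gives $\hat H = H$ and a valuation $w$ on $L$ with $1+M_w \subseteq S$ and $U_w \subseteq B(S)$. In case (ii), take $H := B(S) = S$; the theorem produces $\hat H$ with $(\hat H:S)\leq 2$ and $w$ with $1+M_w \subseteq S$ and $U_w \subseteq \hat H$. In either case the required inequality $(L^*:U_wL^{*2}) \geq (K^*:U_vK^{*2})$ falls out: in case (i) from $U_wL^{*2}\subseteq B(S)$ together with $(L^*:B(S)) = (K^*:B(T)) = (K^*:U_vK^{*2})$; in case (ii) from $U_wL^{*2}\subseteq \hat H$, $(\hat H:S)\leq 2$, and $(L^*:S) = (K^*:T) = 2(K^*:U_vK^{*2})$.

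The main obstacle -- and the heart of the proof -- is promoting the inclusion $(1+M_w)L^{*2} \subseteq S$ supplied by AEJ to the equality $(1+M_w)L^{*2} = S$ demanded by the statement (and, in case (i), $U_wL^{*2} = B(S)$). For this I would use the commutative diagram of hyperfields and coarsenings from the end of Section 2: the subgroups $(1+M)L^{*2}$ arising from valuations on $L$ are linked by coarsening into a controlled tower, and the size of each jump is governed by residue-field square classes via Proposition 4.3, which gives $(U_wL^{*2}:(1+M_w)L^{*2}) = |L_w^*/L_w^{*2}|$. The finite generation of $\Gamma_v$ makes $(L^*:S) = (K^*:T) = |K_v^*/K_v^{*2}|\cdot|\Gamma_v/2\Gamma_v|$ finite, so only finitely many valuations are in play; a careful index count, combined with the structural information that AEJ's construction builds $A_w$ directly from $S$ and $H$, lets us select $w$ for which $(1+M_w)L^{*2}$ fills $S$ exactly. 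In case (i) the same count then forces $U_wL^{*2} = B(S)$.
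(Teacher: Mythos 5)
Note first that the present paper does not actually prove this statement: it is quoted verbatim as Theorem~\ref{main lemma} from \cite[Theorem 5.3]{gm2016}, so there is no proof in this text to compare against. Evaluating your attempt on its own merits, the opening moves are sound: defining $S$ by $S/L^{*2} = \alpha(T/K^{*2})$, observing that $\alpha$ descends to an isomorphism $K/_m T \to L/_m S$, that rigidity, basic part and exceptionality are hyperfield-theoretic notions and hence transported by $\alpha$, and then invoking \cite[Theorem 2.16]{aej} on the $L$-side to obtain a valuation $w$ with $1 + M_w \subseteq S$ and $U_w$ contained in the appropriate basic closure. The inequality $(L^* : U_w L^{*2}) \ge (K^* : U_v K^{*2})$ then follows from these containments exactly as you describe.

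The step you yourself flag as ``the heart of the proof'' --- upgrading $(1+M_w)L^{*2} \subseteq S$ to the required equality, and in case (i) $U_w L^{*2} = B(S)$ --- is, however, not proved, and the sketch you give of it would fail. Its central premise is false: finite generation of $\Gamma_v$ yields $|\Gamma_v/2\Gamma_v| < \infty$, but $(K^*:T) = |\Gamma_v/2\Gamma_v| \cdot |K_v^*/K_v^{*2}|$, and nothing in the hypotheses bounds the residue square-class group $K_v^*/K_v^{*2}$. Indeed the set $\mu_{K,1}$ defined later in this paper consists of valuations falling under case (i) with $(U_v K^{*2} : (1+M_v)K^{*2}) = \infty$, and Theorem~\ref{local one variable} deliberately applies Theorem~\ref{main lemma} to them, so $(L^*:S)$ can be infinite and no finite index count is available. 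Even granting finiteness, ``only finitely many valuations are in play'' is unjustified (nothing confines the valuations of $L$ to a finite list), and the appeal to the AEJ construction ``building $A_w$ directly from $S$ and $H$'' does not by itself yield the claimed equalities: \cite[Theorem 2.16]{aej} only asserts $1+M_w \subseteq S$ and $U_w \subseteq \hat H$, and many valuations can satisfy those containments with $(1+M_w)L^{*2}$ strictly smaller than $S$. Isolating a $w$ for which equality holds is precisely the nontrivial content of \cite[Theorem 5.3]{gm2016}, and your proposal does not supply it.
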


If $K$, $L$ are fields and $\alpha : Q(K) \rightarrow Q(L)$ is a hyperfield isomorphism such that the image of $(1+M_v)K^{*2}/K^{*2}$ under $\alpha$ is $(1+M_w)L^{*2}/L^{*2}$, then $\alpha$ induces a hyperfield isomorphism $K/_m (1+M_v)K^{*2} \rightarrow L/_m (1+M_w)L^{*2}$ such that the obvious diagram
\begin{linenomath}\begin{equation}\label{d1} \xymatrix{
Q(K) \ar[r] \ar[d] & Q(L) \ar[d] \\
K/_m(1+M_v)K^{*2} \ar[r] & L/_m(1+M_w)L^{*2}}\end{equation}\end{linenomath}
commutes. If, in addition, the image of $U_vK^{*2}/K^{*2}$ under $\alpha$ is $U_wL^{*2}/L^{*2}$, then $\alpha$ induces a hyperfield isomorphism $Q(K_v)\rightarrow Q(L_w)$ and a group isomorphism $\Gamma_v/2\Gamma_v \rightarrow \Gamma_w/2\Gamma_w$ such that the obvious diagrams
\begin{linenomath}\begin{equation}\label{d2} \xymatrix{
K/_m(1+M_v)K^{*2} \ar[r] & L/_m(1+M_w)L^{*2} \\
 Q(K_v) \ar[u] \ar[r]  & Q(L_w) \ar[u]
}\end{equation}\end{linenomath}
and
\begin{linenomath}\begin{equation}\label{d3} \xymatrix{
Q(K)^* \ar[r] \ar[d] & Q(L)^* \ar[d] \\
\Gamma_v/2\Gamma_v \ar[r] & \Gamma_w/2\Gamma_w
}\end{equation}\end{linenomath}
commute. We are assuming here that $v,w$ are non-trivial. See \cite[Proposition 5.4]{gm2016} for details.

The \it nominal transcendence degree \rm of $K$ is defined to be
\begin{linenomath}\[\operatorname{ntd}(K) := \begin{cases} \operatorname{trdeg}(K:\mathbb{Q}) &\text{ if } \operatorname{char}(K) = 0 \\ \operatorname{trdeg}(K:\mathbb{F}_p)-1 &\text{ if } \operatorname{char}(K) = p\ne 0\end{cases}.\]\end{linenomath}

For any abelian group $\Gamma$, the \it rational rank \rm of $\Gamma$, denoted $\operatorname{rk}_{\mathbb{Q}}(\Gamma)$, is defined to be the dimension of the $\mathbb{Q}$-vector space $\Gamma \otimes_{\mathbb{Z}} \mathbb{Q}$. If $K$ is a function field over $k$ and $v$ is a valuation on $K$, the \it Abhyankar inequality \rm asserts that \begin{linenomath}\[\operatorname{trdeg}(K:k) \ge \operatorname{rk}_{\mathbb{Q}}(\Gamma_v/\Gamma_{v|k}) + \operatorname{trdeg}(K_v: k_{v|k}),\]\end{linenomath} where $v|k$ denotes the restriction of $v$ to $k$.  We will say the valuation $v$ is \it Abhyankar \rm (relative to $k$)  if \begin{linenomath}\[\operatorname{trdeg}(K:k) = \operatorname{rk}_{\mathbb{Q}}(\Gamma_v/\Gamma_{v|k}) + \operatorname{trdeg}(K_v: k_{v|k}).\]\end{linenomath}
In this case it is well known that $\Gamma_v/\Gamma_{v|k}$ is finitely generated
and $K_v$ is a function field over $k_{v|k}$ \cite[Corollary 26]{fvk}.

\section{Function fields in one variable over local fields}

As remarked in the Introduction, Witt equivalence of local fields is well understood. If $k$ is a local field then \begin{linenomath}\[|k^*/k^{*2}| = \begin{cases} 1 \text{ if } k = \mathbb{C} \\ 2 \text{ if } k = \mathbb{R} \\ 4 \text{ if } k \text{ is } p\text{-adic}, p \ne 2 \\ 2^{[k:\hat{\mathbb{Q}}_2]+2} \text{ if } k \text{ is dyadic} \\ \infty \text{ if } \operatorname{char}(k) = 2 \end{cases}. \]\end{linenomath} If $k$ is $p$-adic and $\ell$ is $p'$-adic, $p,p' \ne 2$, then $k \sim \ell$ iff $-1 \in k^{*2}$ $\Leftrightarrow$ $-1 \in \ell^{*2}$. If $k$ and $\ell$ are dyadic, then $k\sim \ell$ iff either $[k:\hat{\mathbb{Q}}_2] = [\ell:\hat{\mathbb{Q}}_2]$ is odd or $[k:\hat{\mathbb{Q}}_2] = [\ell:\hat{\mathbb{Q}}_2]$ is even and $-1 \in k^{*2}$ $\Leftrightarrow$ $-1 \in \ell^{*2}$.
If $\operatorname{char}(k)=2$ then $k = \mathbb{F}_{2^s}((x))$ for some $s\ge 1$. It follows that $k^2 = \mathbb{F}_{2^s}((x^2))$ so $[k:k^2] = 2$ and $1,x$ is a basis of $k$ as a vector space over $k^2$. It follows from this and \cite[Proposition 2.10]{bm} that if $k,\ell$ are local fields of characteristic $2$ then $k \sim \ell$.

If $k$ is a non-archimedean local field, we denote by $v_0$ the unique discrete rank one valuation of $k$.

The following two lemmas appear to be well-known. Although we do not know the precise reference, these results are obviously closely related to results in \cite{fks} and \cite{v}.

\begin{lemma} \label{weird valuations 1} Suppose $w$ is a non-trivial valuation on $k \in \{\mathbb{R}, \mathbb{C}\}$. Then $\Gamma_w$ is divisible and $k_w$ is algebraically closed.
\end{lemma}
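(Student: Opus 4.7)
My plan is to handle $k=\mathbb{C}$ first (essentially immediate from algebraic closure) and then reduce the $k=\mathbb{R}$ case to it by extending $w$ to a valuation $\tilde w$ on $\mathbb{C}$ and exploiting the fundamental inequality $[\Gamma_{\tilde w}:\Gamma_w]\cdot[\mathbb{C}_{\tilde w}:k_w]\le[\mathbb{C}:\mathbb{R}]=2$. For $k=\mathbb{C}$ both claims follow directly: every nonzero complex number is an $n$-th power, so $\Gamma_w=n\Gamma_w$ for all $n\ge 1$; and given a monic $\bar f\in k_w[X]$, one lifts to $f\in A_w[X]$, uses that $f$ splits over $\mathbb{C}$, observes that roots of a monic polynomial with coefficients in $A_w$ automatically lie in $A_w$, and reduces modulo $M_w$ to obtain a splitting of $\bar f$ over $k_w$.

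For $k=\mathbb{R}$, divisibility of $\Gamma_w$ is the easier half. Since $\mathbb{R}^*/\mathbb{R}^{*2}=\{\pm 1\}$ and $w(-1)=0$, every $\gamma\in\Gamma_w$ has the form $2w(b)$, so $\Gamma_w$ is $2$-divisible. For odd $n$, every real has a real $n$-th root, so $\Gamma_w$ is $n$-divisible. Divisibility by every prime gives divisibility by every positive integer, hence $\Gamma_w$ is divisible.

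The main work is algebraic closure of $k_w$. My plan is to first prove $k_w$ is quadratically closed, and then combine this with $[\mathbb{C}_{\tilde w}:k_w]\le 2$ and the fact that any degree-$2$ extension in characteristic $\ne 2$ is a Kummer extension $k_w(\sqrt d)$ with $d$ a non-square, to conclude $k_w=\mathbb{C}_{\tilde w}$. Now every nonzero real is $\pm$ a square, so $k_w^*=k_w^{*2}\cup(-k_w^{*2})$, and quadratic closure reduces to showing $-1$ is a square in $k_w$. Here is the trick: since $w$ is non-trivial, pick $0\ne x\in M_w$; after replacing $x$ by $-x$ we may assume $x>0$ in $\mathbb{R}$. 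Since $\mathbb{Z}\subseteq A_w$ and $M_w$ is an ideal, $nx\in M_w$ for every $n\in\mathbb{Z}$, and the archimedean property of $\mathbb{R}$ allows choosing $n$ with $m:=nx\ge 1$. Let $a:=\sqrt{m-1}\in\mathbb{R}$; then $a^2+1=m\in M_w$, and $w(a)=0$ because $w(m-1)=\min(w(m),w(1))=0$. Hence $\pi(a)^2=\pi(m-1)=-1$ in $k_w$. When $\operatorname{char}(k_w)=2$ (which happens if $w|_{\mathbb{Q}}$ is the $2$-adic valuation), the $-1$-step is automatic but one must instead rule out Artin--Schreier quadratic extensions; an analogous lift-and-adjust argument shows every $c\in k_w$ lies in the image of $y\mapsto y^2+y$, completing the argument.

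The main obstacle is precisely this step where one extracts the squareness of $-1$ in $k_w$ from the mere non-triviality of $w$. What makes it go through is the archimedean property of $\mathbb{R}$ — that integer multiples of a single nonzero element of $M_w$ can overshoot any prescribed real magnitude — which is not a feature of abstract valuation theory but of the specific ambient field.
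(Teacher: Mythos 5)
Your proof is correct, but it takes a genuinely different route from the paper's. The paper handles the $k=\mathbb{R}$ case head-on: given a monic $f\in A_w[X]$, it uses the approximation theorem for $V$-topologies to perturb the constant coefficient $a_0$ to some $b_0\in A_w$ with $b_0\equiv a_0\pmod{M_w}$ and $b_0<0$; then $X^n+b_{n-1}X^{n-1}+\dots+b_0$ has a real root by the intermediate value theorem, that root lies in $A_w$ by integral closure, and reducing mod $M_w$ splits off a root of $\bar f$. This argument is uniform across residue characteristics and needs no reduction to $\mathbb{C}$. Your approach instead extends $w$ to $\tilde w$ on $\mathbb{C}$, invokes the fundamental inequality $[\Gamma_{\tilde w}:\Gamma_w]\cdot[\mathbb{C}_{\tilde w}:k_w]\le 2$, and reduces the whole problem to showing $k_w$ has no quadratic extension — which forces a case split on $\operatorname{char}(k_w)$ (Kummer vs.\ Artin--Schreier), whereas the paper needs none. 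Your ``lift-and-adjust'' step is essentially the same archimedean mechanism the paper deploys via the approximation theorem (integer multiples of an element of $M_w$ can overshoot any real magnitude), and in fact that mechanism by itself would already let you run the paper's direct argument without the $V$-topology approximation theorem; so your detour through the fundamental inequality buys you some explicitness (a concrete square root of $-1$ in $k_w$) at the price of case analysis. Two tiny points: you should take $m=nx>1$, not merely $\ge 1$, so that $\sqrt{m-1}$ is a nonzero real (though $m\in M_w$ already rules out $m=1$); and the Artin--Schreier step is stated a bit tersely, though the argument you gesture at — replace $a\in A_w$ by $a+nx$ for $n$ large enough that the discriminant $1+4(a+nx)$ is positive, take the real root $y$ of $Y^2+Y-(a+nx)$, note $y\in A_w$ by integral closure, and reduce — does go through.
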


\begin{proof} Case 1: Suppose $k = \mathbb{C}$. Suppose $x\in k^*$. For each integer $n\ge 1$ $\exists$ $y \in k^*$ such that $y^n=x$. Thus $w(x) = w(y^n) = nw(y)$. This proves $\Gamma_w$ is divisible.  Suppose $f(x) = x^n+a_{n-1}x^{n-1}+\dots +a_0$ where $a_i\in A_w$, $i=0,\dots,n-1$. Clearly $\exists$ $r \in k$, $f(r)=0$. Since $A_w$ is integrally closed, $r \in A_w$. Thus the image of $r$ in $k_w$ is a root of the image of $f(x)$ in $k_w[x]$. This proves that $k_w$ is algebraically closed. %Note: The argument in Case 1 is well known and holds for any algebraically closed $k$.

Case 2: Suppose $k = \mathbb{R}$. Suppose $x\in k^*$. One of $x$, $-x$ is positive. Also $w(x)=w(-x)$. Thus, replacing $x$ by $-x$ if necessary, we can assume $x>0$. For each integer $n\ge 1$ $\exists$ $y \in k^*$ such that $y^n= x$. Thus $w(x) = w(y^n) = nw(y)$. This proves $\Gamma_w$ is divisible. Suppose $f(x) = x^n+a_{n-1}x^{n-1}+\dots +a_0$, $a_i\in A_w$, $i=0,\dots,n-1$. By the approximation theorem for $V$-topologies $\exists$ $b_0 \in k$ such that $w(b_0-a_0)>0$ and $|b_0+1| <1$ (so $b_0<0$). Let $b_i = a_i$ for $i=1,\dots,n-1$. Clearly $g(x) := x^n+b_{n-1}x^{n-1}+\dots +b_0$ has a root $r\in k$. Since $A_w$ is integrally closed, $r \in A_w$. By construction $f(x)$ and $g(x)$ have the same image in $k_w[x]$. Thus the image of $r$ in $k_w$ is a root of the image of $f(x)$ in $k_w[x]$. This proves that $k_w$ is algebraically closed.
\end{proof}

\begin{lemma} \label{weird valuations 2} Suppose $w$ is  a non-trivial valuation on a non-archimedean local field $k$. Then either (1) $w=v_0$, or (2) $w, v_0$ are independent. In case (2), $k_w$ is algebraically closed and $\Gamma_w$ is $p$-divisible for each prime $p \ne \operatorname{char}(k_v)$. If $\operatorname{char}(k)=0$ then $\Gamma_w$ is also $p$-divisible for $p = \operatorname{char}(k_{v_0})$.
\end{lemma}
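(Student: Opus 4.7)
The plan is first to reduce to the case that $w$ and $v_0$ are independent, and then to combine the approximation theorem for independent valuations with Hensel's lemma in the complete field $(k,v_0)$ to establish both the divisibility of $\Gamma_w$ and the algebraic closure of $k_w$. For the dichotomy, since $\Gamma_{v_0}=\mathbb{Z}$ has no proper non-trivial convex subgroups, the only proper coarsening of $v_0$ is trivial, so any non-trivial $w$ with $A_{v_0}\subseteq A_w$ must equal $v_0$. Conversely, if $A_w\subsetneq A_{v_0}$, then $w$ is strictly finer than $v_0$ and induces a non-trivial valuation on $k_{v_0}$; but the finite field $k_{v_0}$ admits only the trivial valuation, so this is impossible. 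Hence a non-trivial $w\ne v_0$ is incomparable with $v_0$, and since the unique coarsening of $v_0$ is trivial, $v_0$ and $w$ admit no non-trivial common coarsening, i.e., they are independent.

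For $p$-divisibility of $\Gamma_w$ in case (2), I would fix $\gamma\in\Gamma_w$, pick $a\in k^*$ with $w(a)=\gamma$, and apply the approximation theorem for the independent valuations $w,v_0$ to find $b\in k^*$ with $w(b-a)>\gamma$ (so $w(b)=\gamma$) and $v_0(b-1)>2v_0(p)$. Completeness of $(k,v_0)$ and Hensel's lemma applied to $t^p-b$ at $t=1$, whose derivative is $p$, then produce a $p$-th root $c$ of $b$ in $k$. This is immediate when $p\ne\operatorname{char}(k_{v_0})$ (so $v_0(p)=0$), and when $\operatorname{char}(k)=0$ and $p=\operatorname{char}(k_{v_0})$ the quantity $v_0(p)$ is finite, so the same argument goes through. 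Thus $\gamma=p\cdot w(c)\in p\Gamma_w$.

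For the algebraic closure of $k_w$, let $f(t)=t^n+a_{n-1}t^{n-1}+\cdots+a_0\in A_w[t]$ be monic and assume $f'\not\equiv 0$. Since $k$ is infinite and $f'$ has at most $n-1$ roots in $k$, choose $r\in k$ with $f'(r)\ne 0$. By approximation, pick $b_0\in k$ with $w(b_0-a_0)>0$ and $v_0\bigl(b_0-(a_0-f(r))\bigr)>2v_0(f'(r))$, and set $g(t):=t^n+a_{n-1}t^{n-1}+\cdots+a_1 t+b_0\in A_w[t]$. Then $g'(r)=f'(r)$ and $v_0(g(r))=v_0(f(r)+b_0-a_0)>2v_0(g'(r))$, so Hensel in $(k,v_0)$ yields a root $s\in k$ of $g$; by integrality $s\in A_w$, and since $w(b_0-a_0)>0$ we have $\overline{f}=\overline{g}$ in $k_w[t]$, so the reduction of $s$ is a root of $\overline{f}$ in $k_w$.

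The main obstacle is the inseparable case $f'\equiv 0$, which occurs only in characteristic $p>0$ when $f\in k[t^p]$; there one writes $f(t)=F(t^p)$, finds a root of $\overline{F}$ in $k_w$ by induction on degree, and extracts a $p$-th root using perfectness of $k_w$. This perfectness in turn follows from a further approximation-plus-Hensel argument, though when $p=\operatorname{char}(k)=\operatorname{char}(k_{v_0})$ in positive characteristic it requires slightly more structural input than the preceding steps, for instance the explicit description of $k=\mathbb{F}_q((x))$ modulo $p$-th powers.
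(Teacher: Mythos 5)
Your dichotomy step and your $p$-divisibility argument are correct and essentially the same as the paper's. The trouble lies in the argument that $k_w$ is algebraically closed, where there are two real gaps. First, the Hensel step: you perturb only the constant term $a_0$ to $b_0$, leaving $a_1,\dots,a_{n-1}\in A_w$ untouched, and invoke ``Hensel in $(k,v_0)$'' at $r$ from the inequality $v_0(g(r))>2v_0(g'(r))$. But the Newton form of Hensel's lemma requires $g\in A_{v_0}[t]$ and $r\in A_{v_0}$ so that $g^{(j)}(r)/j!$ is $v_0$-integral for every $j\ge 2$; otherwise the iterates need not improve. Your $a_i$ lie in $A_w$, and since $w$ and $v_0$ are independent this places no upper bound on their $v_0$-norms, while $r$ is constrained only by $f'(r)\ne 0$. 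In fact one can choose a monic $f\in A_w[t]$ of degree $4$, $r=0$, and a $b_0$ satisfying all your conditions so that $g$ has \emph{no} root in $k$ at all: take $v_0(a_2)$ very negative and of suitable parity, and a Newton-polygon computation shows one side has slope outside $\Gamma_{v_0}=\mathbb{Z}$ while the other reduces to a non-square binomial. Second, the inseparable case $f'\equiv 0$ when $\operatorname{char}(k)>0$: you acknowledge it requires perfectness of $k_w$ but leave this unresolved.

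The paper's construction sidesteps both issues at once. It perturbs \emph{every} coefficient $a_i$ to some $b_i$ with $b_i\equiv a_i\pmod{M_w}$, while forcing the image of $g$ in $k_{v_0}[x]$ to be $x^n-1$ (if $p\nmid n$) or $x^n-x$ (if $p\mid n$), where $p=\operatorname{char}(k_{v_0})$. Then $g\in A_{v_0}[x]$ automatically, the reduction mod $v_0$ has the simple root $x=1$ (its derivative there is $n$ or $-1$, both nonzero in $k_{v_0}$), Hensel applies legitimately, and since the construction never consults $f'$, inseparability of $f$ is irrelevant. That is the key idea your approach is missing.
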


\begin{proof}  Denote by $u$ the finest common coarsening of $v_0$ and $w$. Since $v_0$ is discrete rank 1, $u = v_0$ or $u$ is trivial. If $u = v_0$ then $w \preceq v_0$. Since the residue field $k_v$ is finite, the pushdown of $w$ to $k_{v_0}$ must be the trivial valuation, so $w=v_0$ in this case. If $u$ is trivial then $w$ and $v_0$ are obviously independent. Suppose now that $v_0,w$ are independent. Let $x\in k^*$. Suppose first that $p\ne \operatorname{char}(k_{v_0})$. By the approximation theorem $\exists$ $y\in k^*$ such that $w(y-x)>w(x)$ and $v_0(y-1)>0$. By Hensel's lemma $\exists$ $z \in k^*$, $z^p=y$. Since $w(y-x)>w(x)$, $w(x)=w(y)$. Then $w(x)=w(y) = w(z^p)=pw(z)$. Suppose now that $\operatorname{char}(k)=0$ and $p\ne \operatorname{char}(k_{v_0})$. In this case pick $y \in k^*$ so that $w(y-x)>w(x)$ and $v_0(y-1) > \frac{ep}{p-1}$ where $e=v_0(p)$. Here, we are identifying $\Gamma_v = \mathbb{Z}$. Then again, $\exists$ $z\in k^*$, $z^p=y$, and one can argue as before. This proves the assertion concerning $\Gamma_w$. Suppose $f(x) = x^n+a_{n-1}x^{n-1}+\dots +a_0$ where $a_i\in A_w$, $i=0,\dots,n-1$. Let $p = \operatorname{char}(k_{v_0})$. Case 1: $n$ not divisible by $p$. In this case choose $b_i$, $i=0,\dots,n-1$ so that $w(b_i-a_i)>0$, $i=0,\dots,n-1$, $v_0(b_0+1)>0$, and $v_0(b_i)>0$, $i=1,\dots,n-1$. By Hensel's lemma, $g(x) = x^n+b_{n-1}x^{n-1}+\dots +b_0$ has a root $r \in k$. Then $r\in A_w$ and the image of $r$ in $k_w$ is a root of the image of $f(x)$ in $k_w[x]$. Case 2: $n$ is divisible by $p$. In this case choose $b_i$, $i=0,\dots,n-1$, so that $w(b_i-a_i)>0$, $i=0,\dots,n-1$, $v_0(b_1+1)>0$, and $v_0(b_i)>0$, $i=0,\dots,n-1$, $i\ne 1$ and argue as before.
\end{proof}

The following result, which is an extension of \cite[Lemma 7.2]{gm2016}, appears to be well-known, but, as we are not able to point the reader to a reference, we decided to include a proof.

\begin{lemma} \label{well known two} Suppose $K$ is a function field in one variable over $k$, where $k$ is a local field of characteristic $\ne 2$, or any real or algebraically closed field. Then

(1) There are infinitely many discrete rank one Abhyankar valuations on $K$ trivial on $k$.

(2) The group $K^*/K^{*2}$ is infinite.

(3) $B(K^{*2}) = K^*$.
\end{lemma}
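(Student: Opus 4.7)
Let $k_0$ denote the algebraic closure of $k$ in $K$; since $[k_0:k]<\infty$, $k_0$ is again of the same type as $k$, and $K$ is a regular transcendence-degree-one extension of $k_0$. I would treat the three claims in order.

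For part (1), the plan is to write $K$ as a finite extension of $k_0(t)$ for some $t\in K\setminus k_0$. The monic irreducibles of $k_0[t]$ (together with the degree valuation at infinity) give discrete rank one valuations of $k_0(t)$ trivial on $k_0$, each of which extends to at least one discrete rank one valuation of $K$. Since $k_0$ is infinite, the linear polynomials $t-a$, $a\in k_0$, already produce infinitely many such valuations on $K$. Any such $v$ is Abhyankar relative to $k$: $\Gamma_{v|k}=0$, $k_{v|k}=k$, $\operatorname{rk}_{\mathbb{Q}}\Gamma_v=1$, and $K_v$ is algebraic over $k_0$ (so $\operatorname{trdeg}(K_v:k)=0$), matching $\operatorname{trdeg}(K:k)=1$.

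Part (2) is then immediate from weak approximation: given any $n$ pairwise distinct valuations $v_1,\ldots,v_n$ produced in (1), one picks $a_1,\ldots,a_n\in K^*$ with $v_i(a_j)=\delta_{ij}$; reducing the $v_i$ modulo $2$ yields an $\mathbb{F}_2$-linear surjection $K^*/K^{*2}\twoheadrightarrow\mathbb{F}_2^n$, so $|K^*/K^{*2}|\geq 2^n$ for every $n$.

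For part (3), fix $x\in K^*$. Since $x$ has only finitely many zeros and poles on the smooth projective model of $K/k_0$, all but finitely many of the valuations from (1) satisfy $v(x)=0$; choose one such $v$. The Abhyankar property forces $K_v$ to be a finite extension of $k$, hence again a local field of characteristic $\neq 2$, a real closed field, or an algebraically closed field, and in each of these cases $B(K_v^{*2})=K_v^*$ (trivial when $K_v$ is algebraically or real closed, since both classes of $K_v^*/K_v^{*2}$ are basic, and classical for local fields, cf.\ \cite{l}). Thus $\overline x\in B(K_v^{*2})$, so after possibly replacing $x$ by $-x$ one finds $\bar a,\bar b\in K_v$ with $\bar a^2+\bar b^2\overline x$ nonzero and outside $K_v^{*2}\cup\overline x K_v^{*2}$. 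Lifting $\bar a,\bar b$ arbitrarily to $a,b\in A_v$, the element $y:=a^2+b^2 x$ lies in $U_v$ with residue $\bar a^2+\bar b^2\overline x$; if $y$ were in $K^{*2}\cup xK^{*2}$ its residue would fall in $K_v^{*2}\cup\overline x K_v^{*2}$, a contradiction. Hence $x$ (or $-x$) is not $K^{*2}$-rigid, so $x\in B(K^{*2})$.

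The only delicate ingredient is the input $B(K_v^{*2})=K_v^*$ for $K_v$ a finite extension of a local field of characteristic $\neq 2$, but this follows classically from the finiteness of $K_v^*/K_v^{*2}$ together with the fact that sums of two squares exhaust the nonzero square classes of the finite residue field. Everything else is a direct application of weak approximation, the Abhyankar formula, and the elementary lift of a two-variable quadratic representation along the residue map.
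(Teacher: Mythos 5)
Parts (1) and (2) of your argument are fine and are essentially the paper's: extract the constant field $k_0$, realize $K$ as a finite extension of $k_0(t)$, lift the infinitely many degree-one places of $k_0(t)$ to discrete rank one Abhyankar valuations of $K$ trivial on $k$, and then use weak approximation to split off a large $\mathbb{F}_2$-quotient of $K^*/K^{*2}$.

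Part (3) has a genuine gap. The claim that $B(K_v^{*2})=K_v^*$ when $K_v$ is a finite extension of a non-archimedean local field of characteristic $\ne 2$ is \emph{false} in the non-dyadic case. Concretely, if $K_v$ is $p$-adic with $p\ne 2$, let $v_0$ denote its canonical valuation. Since $1+M_{v_0}\subseteq K_v^{*2}$, one has $(1+M_{v_0})K_v^{*2}=K_v^{*2}$, and the general facts recalled in Section 2 give $B(K_v^{*2})\subseteq U_{v_0}K_v^{*2}$, a subgroup of index $2$ in $K_v^*$. Indeed a uniformizer $\pi$ of $K_v$ is rigid: by Springer's theorem $D_{K_v}\langle 1,\pi\rangle\subseteq K_v^{*2}\cup\pi K_v^{*2}$, and the same holds for $-\pi$, so $\pi\notin B(K_v^{*2})$. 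The heuristic you cite — sums of two squares fill out the unit square classes of the residue field — only shows that $1\in B(K_v^{*2})$, i.e.\ that $1$ is not $K_v^{*2}$-rigid; it does not propagate to the non-unit classes. Consequently, for a given $x\in K^*$, picking a valuation $v$ with $v(x)=0$ does \emph{not} guarantee that $\overline x$ or $-\overline x$ is non-rigid in $K_v$: for instance $x=p$ in $K=\mathbb{Q}_p(t)$, $p\ne 2$, reduces to the (rigid) uniformizer $p$ for every valuation centred at a $\mathbb{Q}_p$-rational point.

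The paper's proof of (3) in the $p$-adic case is structured precisely to avoid this problem, and the distinction matters. It splits into two cases: (a) if some discrete rank one Abhyankar valuation $v$ trivial on $k$ satisfies $x\in(1+M_v)K^{*2}$ — i.e.\ the image of $x$ in $K_v^*/K_v^{*2}$ is the \emph{trivial} class — then one only needs that $1$ is not $K_v^{*2}$-rigid (which is exactly the sums-of-two-squares input you invoke), and the explicit element $y=(cd)^2+xe^2$ does the job; (b) if no such $v$ exists, then $x\notin(1+M_v)K^{*2}$ for every such $v$, and the paper uses weak approximation for two inequivalent $v,w$ to build $y=a^2+x$ with $y\in x(1+M_v)K^{*2}$ and $y\in(1+M_w)K^{*2}$, whence $y\notin K^{*2}\cup xK^{*2}$. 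Your argument would need to be supplemented by something like case (b), or by a more delicate choice of $v$ ensuring $\overline{x}$ becomes a unit square class in $K_v$ (e.g.\ by passing to a suitably ramified closed point), neither of which you do. As written, the proof of (3) does not go through for $k$ $p$-adic, $p\ne 2$.
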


\begin{proof} (1) This is clear if $K$ is real or algebraically closed. Otherwise, there exists a subfield $K_0 \subseteq K$ with $\operatorname{ntd}(K_0) = \operatorname{ntd}(K)-1$. Fix $x\in K$ transcendental over $K_0$. $K$ is a finite extension of $K_0(x)$. The principal ideal domain $K_0[x]$ has infinitely many irreducibles. Each irreducible $f$ of $K_0[x]$ defines a discrete rank one valuation $v_f$ on $K_0(x)$ with residue field $K_0[x]/(f)$. The valuation $v_f$  extends in some (possibly non-unique) way to a discrete rank one valuation on $K$ whose residue field is some finite extension of $K_0[x]/(f)$.  

(2) is true for any field $K$ having infinitely many inequivalent discrete rank one valuations. Let $v_1,\dots,v_n$ be inequivalent discrete rank one valuations on $K$. Use the approximation theorem to produce $x_i\in K^*$, $i=1,\dots,n$ so that $v_i(x_j)=\delta_{ij}$ (Kronecker's delta), for $i,j=1,\dots,n$. Then the $2^n$ products $x_1^{e_1}\dots x_n^{e_n}$, $e_i \in \{ 0,1\}$, belong to distinct square classes. This proves $|K^*/K^{*2}|\ge 2^n$. Since $n$ is can be chosen to be any positive integer, the result follows.

(3) Suppose $x\in K^*$, $x\notin K^{*2}$. Suppose first that $k$ is a $p$-adic local field. Let $v$ be a discrete rank one Abhyankar valuation on $K$ with $v|k$ trivial. Suppose first that $x \in (1+M_v)K^{*2}$, say $x = uc^2$, $u\in 1+M_v$, $c\in K^*$. Since $K_v$ is a finite extension of $k$ it is also a $p$-adic local field. Consequently, there exists $\pi(z) \in K_v^*$ and $\pi(d), \pi(e) \in K_v^*$ such that $\pi(z) = \pi(d)^2+\pi(e)^2$, $\pi(z) \notin K_v^{*2}$. Take $y = c^2(d^2+ue^2) =(cd)^2+xe^2$. Then $y\notin K^{*2}\cup xK^{*2}$. If such a valuation $v$ does not exist, then there exist inequivalent discrete rank one valuations $v,w$ on $K$ with $x \notin (1+M_v)K^{*2}$, $x\notin (1+M_w)K^{*2}$. In this case, use the approximation theorem to choose $a \in K^*$ so that $v(a^2)>v(x)$, $w(a^2)<w(x)$. %One can choose $b=1$ if one wants.
Define $y= a^2+x$. Then $y = x(1+\frac{a^2}{x}) \in x(1+M_v)K^{*2}$, so $y \notin K^{*2}$. Similarly, %if $y \in xK^{*2}$, then
$y= a^2(1+\frac{x}{a^2}) \in (1+M_w)K^{*2}$, so $y\notin xK^{*2}$.
%$a,b \in K^*$ so that $v(a^2)>v(b^2x)$, $w(a^2)<w(b^2x)$. One can choose $b=1$ if one wants. Define $y= a^2+xb^2$. Then $y = b^2x(1+\frac{a^2}{b^2x}) \in x(1+M_v)K^{*2}$, so $y \notin K^{*2}$. Similarly, %if $y \in xK^{*2}$, then
%$y= a^2(1+\frac{b^2x}{a^2}) \in (1+M_w)K^{*2}$, so $y\notin xK^{*2}$.

Next suppose $k$ is algebraically closed. Then $K$ is a $C_1$-field so every two dimensional form $\langle 1,x\rangle$ is universal, i.e., $x$ is not rigid. Finally, suppose $k$ is real closed. The argument in \cite[Corollary 3.8]{gh} shows that $u(K) = 2$ and either $K$ is non-real or $K \sim k(t)$. If $K$ is non-real then every two dimensional form $\langle 1,x\rangle$ is universal, i.e., $x$ is not rigid. Suppose $K = k(t)$. We want to show $x$ or $-x$ is not rigid for each $x \in K^*$. We may assume $x \notin K^{*2}$. Modifying $x$ by a square we may assume $x = f(t)\in k[t]$ for some non-constant polynomial $f(t)$. Replacing $x$ by $-x$ if necessary, there exists $p_1,p_2 \in k$ such that $f(p_1)<f(p_2) < 0$. Take $y= r+x$ where $r\in k$, $f(p_1)<-r<f(p_2)$. Then $y \in K^{*2}+xK^{*2}$, $y\notin K^{*2}$ (because $y$ is negative at $p_1$) and $y \notin xK^{*2}$ (because $\frac{y}{x}$ is negative at $p_1$), so $x$ is not rigid.
\end{proof}

Our next result can be viewed as some sort of local analog of \cite[Theorem 7.3]{gm2016}.

\begin{theorem} \label{local zoo} Let $K$ be a function field in one variable over a local field $k$, $\operatorname{char}(k) \ne 2$. In case $k$ is $p$-adic, denote by $v_0$ the canonical valuation on $k$. Let $v$ be a valuation on $K$, and let $T = (1+M_v)K^{*2}$. 
\begin{enumerate}
\item[(I)] Abhyankar Cases:
\begin{enumerate}
\item[Case 1] If$v$ is trivial, then  $T = K^{*2}$, $(K^*:T) = \infty$ and $B(T) = K^* = U_vK^{*2}$.
\item[Case 2] If $\Gamma_v = \mathbb{Z}$ and $v|k$ is trivial, then
\begin{enumerate}
\item[(a)] if $k = \mathbb{R}$ or $\mathbb{C}$ then $(K^*:U_vK^{*2}) = 2$ and $B(T) = \pm T = U_vK^{*2}$,
\item[(b)] if $k$ is $p$-adic, $p\ne 2$, $(K^*:U_vK^{*2}) = 2$, $(U_vK^{*2}: T)=4$ and $B(T) =\pm T$,
\item[(c)] if $k$ is dyadic, $(K^*:U_vK^{*2}) = 2$, $(U_vK^{*2}: T)=2^{[K_v:\hat{\mathbb{Q}}_2]+2}$ and $B(T) = U_vK^{*2}$.
\end{enumerate}
\item[Case 3] If $k$ is $p$-adic and $v|k = v_0$, then
\begin{enumerate}
\item[(a)] if $\Gamma_v = \mathbb{Z} = \Gamma_{v_0}$, $\operatorname{trdeg}(K_v:k_{v_0})=1$, then  $(K^*:U_vK^{*2}) = 2$, $(U_vK^{*2}: T)=\infty$ and $B(T) = U_vK^{*2}$,
\item[(b)] If $\Gamma_v = \mathbb{Z}\times \mathbb{Z} = \Gamma_{v_0}\times \mathbb{Z}$, $\operatorname{trdeg}(K_v:k_{v_0})=0$, then  $(K^*:U_vK^{*2}) = 4$, $(U_vK^{*2}: T)=\begin{cases} 2 \text{ if } p \ne 2\\ 1 \text{ if } p=2\end{cases}$ and $B(T) = \pm T$.
\end{enumerate}
\item[Case 4] If $v|k$ has divisible value group and algebraically closed residue field, then
\begin{enumerate}
\item[(a)] if $\Gamma_v = \Gamma_{v|k}\times \mathbb{Z}$, $\operatorname{trdeg}(K_v : k_{v|k})=0$ then $(K^*:U_vK^{*2}) = 2$ and $B(T) = T = U_vK^{*2}$,
\item[(b)] if $\Gamma_v = \Gamma_{v|k}$, $\operatorname{trdeg}(K_v: k_{v|k})=1$ then $(K^*:T) = \infty$ and $B(T) = K^* = U_vK^{*2}$.
\end{enumerate}
\end{enumerate}
\item[(II)] Non-Abhyankar Cases:
\begin{enumerate}
\item[Case 5] If $k$ is $p$-adic, $v|k = v_0$, $\Gamma_v/\Gamma_{v_0}$ is torsion, and $\operatorname{trdeg}(K_v: k_{v_0})=0$, then $\mathbb{Z} \subseteq \Gamma_v \subseteq \mathbb{Q}$, $(K^*: U_vK^{*2})= 1$ or $2$, $(U_vK^{*2}: T) = 1$ or $2$ and $B(T) = \pm T$.
\item[Case 6] If$v|k$ has divisible value group and algebraically closed residue field, then $\Gamma_v = \Gamma_{v|k}$ is divisible, $K_v = k_{v|k}$ is algebraically closed, and $K^* = U_vK^{*2} = B(T) = T$.
\end{enumerate}
\end{enumerate}
\end{theorem}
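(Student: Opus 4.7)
The plan is a systematic case-by-case verification of the three invariants $(K^*:U_vK^{*2})$, $(U_vK^{*2}:T)$, and $B(T)$ for each prescribed type of valuation $v$. Three ingredients, already assembled in Section 2, will be used uniformly: (a) the canonical group isomorphism $K^*/U_vK^{*2} \cong \Gamma_v/2\Gamma_v$, which reduces the first index to a computation in the value group; (b) the canonical isomorphism $U_vK^{*2}/T \cong K_v^*/K_v^{*2}$, valid for $v$ non-trivial, which reduces the second index to counting square classes in the residue field; and (c) the induced isomorphism $B(T)/T \cong B(K_v^{*2})/K_v^{*2}$, which reduces the identification of the basic part to a residue-field calculation. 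In every Abhyankar sub-case, the Abhyankar equality additionally pins down the structure of $K_v$ as a function field over $k_{v|k}$ of the predicted transcendence degree, and forces $\Gamma_v/\Gamma_{v|k}$ to be finitely generated of the predicted rational rank.

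I would begin with Case 1, which is immediate from parts (2) and (3) of Lemma \ref{well known two}. In Case 2, $v|k$ trivial together with $\Gamma_v = \mathbb Z$ force $\operatorname{trdeg}(K_v:k) = 0$, so $K_v$ is a finite extension of $k$, hence a local field of the same kind; plugging the classical values of $|K_v^*/K_v^{*2}|$ and $B(K_v^{*2})/K_v^{*2}$ for archimedean, non-dyadic $p$-adic, and dyadic local fields yields (a)--(c). In Case 3, the Abhyankar equality gives $\operatorname{rk}_{\mathbb{Q}}(\Gamma_v/\mathbb{Z}) + \operatorname{trdeg}(K_v:k_{v_0}) = 1$, splitting into sub-case 3(a) where $K_v$ is a function field in one variable over the finite field $k_{v_0}$, i.e.\ a global field, to which the same argument as in Lemma \ref{well known two} applies, and sub-case 3(b) where $K_v$ is a finite extension of $k_{v_0}$ and $\Gamma_v = \mathbb{Z}\times\mathbb{Z}$. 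In Case 4, Lemmas \ref{weird valuations 1} and \ref{weird valuations 2} guarantee that $k_{v|k}$ is algebraically closed, and the Abhyankar equality splits the analysis into sub-case 4(a) with $K_v = k_{v|k}$ algebraically closed, and sub-case 4(b) where $K_v$ is a function field in one variable over $k_{v|k}$, again handled by Lemma \ref{well known two}.

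The non-Abhyankar cases require separate treatment. In Case 6, the Abhyankar inequality together with $\operatorname{trdeg}(K:k) = 1$ and the hypotheses on $v|k$ forces $\operatorname{rk}_{\mathbb{Q}}(\Gamma_v/\Gamma_{v|k}) = \operatorname{trdeg}(K_v:k_{v|k}) = 0$, hence $\Gamma_v = \Gamma_{v|k}$ is divisible and $K_v = k_{v|k}$ is algebraically closed, collapsing every invariant to its trivial value. Case 5 is the most delicate: $\Gamma_v/\mathbb Z$ being torsion implies $\mathbb{Z}\subseteq\Gamma_v\subseteq\mathbb{Q}$, and any such subgroup has $|\Gamma_v/2\Gamma_v| \in \{1,2\}$; simultaneously, $K_v$ is algebraic over the finite field $k_{v_0}$, so $|K_v^*/K_v^{*2}| \in \{1,2\}$ as well. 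The identification $B(T) = \pm T$ then follows by showing that in any algebraic extension of a finite field of odd characteristic the basic elements form the subgroup $\pm K_v^{*2}$, which in turn reduces to the finite-field statement because any finite collection of elements of $K_v$ lies in a common finite subfield.

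The step I expect to be the main obstacle is the clean handling of the basic-element computation at the residue level, particularly in the dyadic sub-case of Case 2 and throughout Case 5, where the interplay between hyperfield addition and the explicit structure of $K_v^*/K_v^{*2}$ must be tracked carefully; everything else amounts to an orchestrated combination of the isomorphisms (a)--(c) with the classical local-field data and the two valuation-theoretic lemmas above.
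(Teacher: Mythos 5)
Your proposal is correct and follows essentially the same approach as the paper's proof: case-by-case analysis using the canonical isomorphisms $K^*/U_vK^{*2} \cong \Gamma_v/2\Gamma_v$, $U_vK^{*2}/T \cong K_v^*/K_v^{*2}$, and $B(T)/T \cong B(K_v^{*2})/K_v^{*2}$, combined with Lemmas \ref{weird valuations 1}, \ref{weird valuations 2}, \ref{well known two} and the well-known structure of square-class groups and rigid elements in local and finite fields. The only caveat worth noting is that in Case 3(a) the residue field $K_v$ is a function field over a \emph{finite} field rather than a local one, so Lemma \ref{well known two} does not apply verbatim and its arguments must be adapted, a point you flag appropriately and which the paper itself glosses over.
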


\begin{proof} We make use of the isomorphism $K^*/U_vK^{*2} \cong \Gamma_v/2\Gamma_v$, the isomorphism  $U_vK^{*2}/(1+M_v)K^{*2} \cong K_v^*/K_v^{*2}$ and the properties of the $T$-basic part $B(T)$ of $K$ with $T=(1+M_v){K^*}^2$ described above. The arguments are straightforward and follow, more or less, the same line of reasoning as the proof of \cite[Theorem 7.3]{gm2016}. Case 1 uses Lemma \ref{well known two}. Case 2 uses the well-known structure of quadratic hyperfields of local fields. Case 3 (a) uses Lemma \ref{well known two}. Case 3 (b) uses well-known properties of quadratic hyperfields of finite fields. Case 4 uses Lemmas \ref{weird valuations 1} and \ref{weird valuations 2}, and also Lemma \ref{well known two}, in Case 4 (b). Case 5 is more or less trivial. Case 6 uses Lemmas \ref{weird valuations 1} and \ref{weird valuations 2}.
\end{proof}

Let $K$ be a function field in one variable a local field $k$ of characteristic $\ne 2$.
Let $\mu_{K,0}$ be the set of valuations $v$ of $K$ such that $(K^*:U_vK^{*2})=2$, $2^3 \le (U_vK^{*2}: (1+M_v)K^{*2}) < \infty$ and $B((1+M_v)K^{*2}) = U_vK^{*2}$. Let $\mu_{K,1}$ be the set of valuations $v$ on $K$ such that $(K^*:U_vK^{*2})=2$, $(U_vK^{*2}: (1+M_v)K^{*2}) = \infty$ and $B((1+M_v)K^{*2}) = U_vK^{*2}$. Let $\mu_{K,2}$ be the set of valuations $v$ on $K$ such that $(K^*:U_vK^{*2})=4$, $(U_vK^{*2}: (1+M_v)K^{*2}) = 2$ and $B((1+M_v)K^{*2}) = U_vK^{*2}$. Let $\mu_{K,3}$ be the set of valuations $v$ on $K$ such that $(K^*:U_vK^{*2})=4$, $(U_vK^{*2}: (1+M_v)K^{*2}) = 2$ and $B((1+M_v)K^{*2}) = (1+M_v)K^{*2}$.

Elements of $\mu_{K,i}$ correspond to various cases of Theorem \ref{local zoo}:  Case 2 (a) if $i=0$, Case 3 (a) if $i=2$, and Case 3 (b), (subcase $p \ne 2$) if $i=2$ or $3$. Note that \begin{linenomath}\[\mu_{K,0}\cup \mu_{K,1} \cup \mu_{K,2} \cup \mu_{K,3}\]\end{linenomath}  is a subset of the set of all Abhyankar valuations of $K$ over $k$. Of course, some of the sets $\mu_{K,i}$ are empty. Specifically, $\mu_{K,0} \ne \emptyset$ iff $k$ is dyadic, $\mu_{K,1} \ne \emptyset$ iff $k$ is $p$-adic, $\mu_{K,2} \cup \mu_{K,3} \ne \emptyset$ iff $k$ is $p$-adic, $p\ne 2$.

\begin{theorem}\label{local one variable} Suppose $K$, $L$ are function fields in one variable over local fields of characteristic $\ne 2$ which are Witt equivalent via a hyperfield isomorphism $\alpha : Q(K) \rightarrow Q(L)$. Then for each $i\in \{ 0,1,2,3\}$ there is a uniquely defined bijection  between $\mu_{K,i}$ and $\mu_{L,i}$ such that, if $v \leftrightarrow w$ under this bijection, then $\alpha$ maps $(1+M_v)K^{*2}/K^{*2}$ onto $(1+M_w)L^{*2}/L^{*2}$ and $U_vK^{*2}/K^{*2}$ onto $U_wL^{*2}/L^{*2}$ for $i \in \{0,1,2\}$ and such that $\alpha$ maps $(1+M_v)K^{*2}/K^{*2}$ onto $(1+M_w)L^{*2}/L^{*2}$ for $i=3$.
\end{theorem}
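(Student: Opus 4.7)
My plan is to construct the bijection $v \leftrightarrow w$ one valuation at a time, following the strategy used for the global-field analogue in \cite[Theorem 7.5]{gm2016}: for each $v \in \mu_{K,i}$, I will apply Theorem \ref{main lemma} and then verify that the resulting $w$ lies in $\mu_{L,i}$ with the required mapping properties. Fix $i \in \{0,1,2,3\}$ and $v \in \mu_{K,i}$, and set $T = (1+M_v)K^{*2}$. The invariants defining $\mu_{K,i}$ place $v$ into Cases 2(c), 3(a), or 3(b) of Theorem \ref{local zoo}, so $\Gamma_v$ is isomorphic to $\mathbb{Z}$ or $\mathbb{Z}^2$ and, in particular, finitely generated. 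For $i \in \{0,1,2\}$ the goal is to invoke hypothesis (i) of Theorem \ref{main lemma}: the equality $B(T) = U_vK^{*2}$ is built into the definition of $\mu_{K,i}$, and what remains is to show that $T$ is unexceptional. For $i = 3$ the definition of $\mu_{K,3}$ matches hypothesis (ii) directly.

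Unexceptionality is automatic for $i \in \{0,1\}$: since $\pm T$ has index at most $2$ in $U_vK^{*2} = B(T)$ while $(B(T):T) \ge 8$ or $\infty$, we have $\pm T \subsetneq B(T)$, ruling out exceptionality. The delicate case is $i = 2$, where $(B(T):T) = 2$ and one must argue that $T$ is not additively closed, which is the only remaining way $T$ could be exceptional. The key input is that in Case 3(b) with $p \ne 2$ the residue field $K_v$ is a finite field of odd characteristic, whose group of squares $K_v^{*2}$ is well known not to be additively closed: there exist $a, b \in K_v^{*2}$ with $a+b$ a non-zero non-square. Lifting $a = \pi_v(\tilde c)^2$, $b = \pi_v(\tilde d)^2$ via units $\tilde c, \tilde d \in U_v$, the squares $\tilde c^{2}, \tilde d^{2}$ lie in $T$, but $\pi_v(\tilde c^{2} + \tilde d^{2}) = a+b$ is a non-square in $K_v$, so $\tilde c^{2} + \tilde d^{2} \in U_vK^{*2} \setminus T$, proving that $T+T \not\subseteq T \cup \{0\}$.

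Theorem \ref{main lemma} then yields a valuation $w$ on $L$ with $\alpha((1+M_v)K^{*2}/K^{*2}) = (1+M_w)L^{*2}/L^{*2}$ (and, for $i \in \{0,1,2\}$, also $\alpha(U_vK^{*2}/K^{*2}) = U_wL^{*2}/L^{*2}$) together with $(L^*:U_wL^{*2}) \ge (K^*:U_vK^{*2})$. Because $\alpha$ is a group isomorphism, all the outer indices match exactly, and the induced hyperfield isomorphism $K/_m T \cong L/_m (1+M_w)L^{*2}$ of diagram \eqref{d1} preserves $T$-rigidity and hence the basic subgroup, so $\alpha$ carries $B(T)/T$ onto $B((1+M_w)L^{*2})/(1+M_w)L^{*2}$; this delivers the basic-part condition defining $\mu_{L,i}$. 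For $i = 3$ the upper bound on $(U_wL^{*2}:(1+M_w)L^{*2})$ built into hypothesis (ii) combines with the forced index equalities to pin down all four invariants of $\mu_{L,3}$. Uniqueness of $w$ given $v$ comes from the fact that the subgroup $(1+M_w)L^{*2}$, together with $U_wL^{*2}$ when $i \le 2$, determines $w$ up to equivalence via the canonical valuation construction of \cite[Theorem 2.16]{aej}. Running the whole argument with $\alpha^{-1}$ in place of $\alpha$ produces an inverse map $\mu_{L,i} \to \mu_{K,i}$, giving the desired bijection. The main obstacle in the plan is the unexceptionality verification for $i = 2$: it is the one spot where the defining invariants of $\mu_{K,2}$ do not immediately imply the hypothesis of Theorem \ref{main lemma}, and one must appeal to the explicit arithmetic of finite residue fields.
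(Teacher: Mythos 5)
Your argument is correct and follows the same route as the paper, whose entire proof is the single line ``The correspondence $v \leftrightarrow w$ is just the one defined by Theorem~\ref{main lemma}.'' You supply the hypothesis-checking that this one-liner leaves to the reader, in particular verifying unexceptionality of $T=(1+M_v)K^{*2}$ so that part (i) of Theorem~\ref{main lemma} applies for $i\in\{0,1,2\}$ (for $i\in\{0,1\}$ the index comparison $(\pm T:T)\le 2<(B(T):T)$ does it, and for $i=2$ the lift of a non-additively-closed pair of squares from the finite residue field $K_v$ does it), and checking via Theorem~\ref{local zoo} that the resulting $w$ indeed lands in $\mu_{L,i}$.
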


\begin{proof} The correspondence $v \leftrightarrow w$ is just the one defined by Theorem \ref{main lemma}.
\end{proof}

\begin{theorem} \label{local rational points} Let $K \sim L$ be function fields in one variable over local fields $k$ and $\ell$ respectively,  with fields of constants $k$ and $\ell$ respectively. Then $k\sim \ell$ except possibly when $k,\ell$ are both dyadic local fields.  In the latter case if there exists $v \in \mu_{K,0}$ with $K_v =k$ and $w \in \mu_{L,0}$ with $L_w= \ell$ then $k \sim \ell$.
\end{theorem}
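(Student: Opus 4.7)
The plan is to case-split on the isomorphism type of $k$ (archimedean, non-archimedean non-dyadic of characteristic $\ne 2$, non-archimedean of characteristic $2$, or dyadic), show that $\ell$ falls in the same category, and deduce $k\sim\ell$ case by case. Two generic facts will be used throughout. First, since the hyperfield isomorphism satisfies $\alpha(\overline{-1})=\overline{-1}$, we have $-1\in K^{*2}\Leftrightarrow -1\in L^{*2}$; and since $k$ (resp.\ $\ell$) is algebraically closed in $K$ (resp.\ $L$), this descends to $-1\in k^{*2}\Leftrightarrow -1\in\ell^{*2}$. Second, by Pfister's theorem $K$ is formally real iff $W(K)$ is non-torsion, so formal realness is a Witt-equivalence invariant, and hence $k$ is formally real iff $\ell$ is. Finally, the bijections $\mu_{K,i}\leftrightarrow\mu_{L,i}$ supplied by Theorem \ref{local one variable}, combined with the iff-characterizations recorded before that theorem, force $k$ and $\ell$ to belong to the same one of the four categories above.

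For $k$ in any category other than the dyadic one, the conclusion follows with no further hypothesis. If $k$ is archimedean then so is $\ell$, and the formal-real invariant forces $k=\mathbb{R}\Leftrightarrow\ell=\mathbb{R}$, hence $k\sim\ell$. If $k$ is a non-archimedean non-dyadic local field of characteristic $\ne 2$, then so is $\ell$; every such field has $|{\cdot}^*/{\cdot}^{*2}|=4$, so combined with the preservation of the square class of $-1$ the classification of Witt equivalence recalled at the opening of Section~3 yields $k\sim\ell$. If $k$ is a non-archimedean local field of characteristic $2$, then so is $\ell$, and all such local fields are Witt equivalent by \cite{bm}.

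Finally, suppose $k,\ell$ are both dyadic, and let $v\in\mu_{K,0}$ with $K_v=k$ and $w\in\mu_{L,0}$ with $L_w=\ell$ be as supplied by hypothesis. Denote by $w'\in\mu_{L,0}$ the valuation paired with $v$ and by $v'\in\mu_{K,0}$ the valuation paired with $w$ under the bijection of Theorem \ref{local one variable}. Inspection of the cases of Theorem \ref{local zoo} shows that every element of $\mu_{L,0}$ arises from Case~$2$(c), so its restriction to $\ell$ is trivial; hence $\ell\subseteq L_{w'}$ with $L_{w'}$ a dyadic local field finite over $\ell$, and symmetrically $k\subseteq K_{v'}$. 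Diagrams (\ref{d1})--(\ref{d2}) applied to $(v,w')$ yield a hyperfield isomorphism $Q(K_v)\cong Q(L_{w'})$, in particular $|k^*/k^{*2}|=|L_{w'}^*/L_{w'}^{*2}|$, while the analogous statement for $(v',w)$ gives $|\ell^*/\ell^{*2}|=|K_{v'}^*/K_{v'}^{*2}|$. Using the dyadic formula $|F^*/F^{*2}|=2^{[F:\hat{\mathbb{Q}}_2]+2}$ one obtains the sandwich
\[
[\ell:\hat{\mathbb{Q}}_2]\le[L_{w'}:\hat{\mathbb{Q}}_2]=[k:\hat{\mathbb{Q}}_2]\le[K_{v'}:\hat{\mathbb{Q}}_2]=[\ell:\hat{\mathbb{Q}}_2],
\]
forcing equality throughout, whence $L_{w'}=\ell$ (a dyadic local field finite over $\ell$ and of the same degree as $\ell$ over $\hat{\mathbb{Q}}_2$ must equal $\ell$). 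The isomorphism $Q(K_v)\cong Q(L_{w'})$ then reads $Q(k)\cong Q(\ell)$, i.e.\ $k\sim\ell$. The main obstacle is precisely this last step: the bijection of Theorem \ref{local one variable} by itself only guarantees $Q(k)\cong Q(L_{w'})$ for \emph{some} $w'\in\mu_{L,0}$, and without the second hypothesis $L_w=\ell$ there is no symmetric inequality to close the degree sandwich, so one would only obtain $k\sim L_{w'}$ for a potentially proper finite dyadic extension $L_{w'}\supsetneq\ell$.
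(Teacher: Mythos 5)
Your treatment of the dyadic case is essentially the paper's argument: you close a degree sandwich using $k\subseteq K_{v'}$, $\ell\subseteq L_{w'}$ and the preservation of $[K_v:\hat{\mathbb{Q}}_2]$ along the bijection, whereas the paper phrases this as a minimality argument (choose $v\leftrightarrow w$ minimizing $[K_v:\hat{\mathbb{Q}}_2]$), but the two are interchangeable. Your $p$-adic ($p\ne 2$) case is also fine and matches the paper's level argument.

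There are, however, two gaps. The first is minor: in the archimedean case you invoke ``the formal-real invariant forces $k=\mathbb{R}\Leftrightarrow\ell=\mathbb{R}$.'' Pfister's theorem gives that $K$ is formally real iff $L$ is, and $K$ formally real does imply $k$ formally real, but $k=\mathbb{R}$ does \emph{not} imply $K$ formally real (e.g.\ $K=\mathbb{R}(x,y)$ with $x^2+y^2+1=0$ is non-real with field of constants $\mathbb{R}$). So formal realness of $K,L$ can both fail while $k=\mathbb{R}$, $\ell=\mathbb{C}$, and the invariant you cite does not separate these. The correct device here is the descent of $-1\in(\cdot)^{*2}$ which you correctly record in your preamble — that gives $k=\mathbb{C}\Leftrightarrow -1\in k^{*2}\Leftrightarrow -1\in K^{*2}\Leftrightarrow -1\in L^{*2}\Leftrightarrow \ell=\mathbb{C}$ — but you do not actually deploy it in this case.

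The second gap is substantive and is the core of the omission. You assert that ``the bijections $\mu_{K,i}\leftrightarrow\mu_{L,i}$ supplied by Theorem~\ref{local one variable}, combined with the iff-characterizations recorded before that theorem, force $k$ and $\ell$ to belong to the same one of the four categories.'' But Theorem~\ref{local one variable} is stated only for function fields over local fields of characteristic $\ne 2$, and the ``iff'' characterizations of the $\mu_{K,i}$ come from Theorem~\ref{local zoo}, which also assumes $\operatorname{char}(k)\ne 2$. So neither tool is available to prove that $\operatorname{char}(k)=2\Leftrightarrow\operatorname{char}(\ell)=2$; invoking them presupposes what must be shown. The paper spends the second half of its proof precisely on this: showing that a function field over a local field of characteristic $\ne 2$ cannot be Witt equivalent to one over a local field of characteristic $2$. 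That argument is non-trivial — it splits into $k=\mathbb{C}$ (a $u$-invariant/value-set computation against $[L:L^2]=4$), $k=\mathbb{R}$ (level or formal realness), and $k$ $p$-adic (producing $\overline{x},\overline{y}$ with $\overline{y}\in D_K\langle 1,\overline{x}\rangle$ but $D_K\langle 1,\overline{y}\rangle\not\subseteq D_K\langle 1,\overline{x}\rangle$ via two well-chosen discrete rank one valuations and rigidity, contradicting Lemma~\ref{characteristic 2}). Your proposal contains none of this, and the sentence ``If $k$ is a non-archimedean local field of characteristic $2$, then so is $\ell$'' is exactly the assertion left unjustified.
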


We will use the following two technical lemmas for the proof, which are also applicable for the proof of the corresponding result for function fields over global fields.

\begin{lemma}[{\cite[Lemma 7.6]{gm2016}}] \label{characteristic 2} Suppose $K$ is a field, $\operatorname{char}(K)=2$, $\overline{x},\overline{y}\in K^*/K^{*2}$, $\overline{x},\overline{y} \ne 1$ and $\overline{y} \in D_K\langle 1,\overline{x}\rangle$. Then $D_K\langle 1,\overline{y}\rangle = D_K\langle 1,\overline{x}\rangle$.
\end{lemma}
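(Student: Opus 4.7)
The plan is to exploit the characteristic-2 identity $a^2+b^2=(a+b)^2$ to rearrange any representation $z = c^2 + x d^2$ into one of the form $(\ast)^2 + y(\ast)^2$, by substituting an expression for $x$ obtained from the given relation $\overline{y}\in D_K\langle 1,\overline{x}\rangle$.

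First I would unwind the hypothesis. Since $\overline{x}\neq 1$ means $x\notin K^{*2}$ and in characteristic $2$ we have $-x = x$, so $-1\cdot x \notin K^{*2}$; hence $D_K\langle 1,x\rangle$ is computed by the nontrivial branch of the definition, and similarly for $D_K\langle 1,y\rangle$ since $y\notin K^{*2}$. The assertion $\overline{y}\in D_K\langle 1,\overline{x}\rangle$ therefore produces $a,b\in K$ and $t\in K^*$ with
\begin{linenomath}\[ yt^2 \;=\; a^2 + xb^2. \]\end{linenomath}
If $b=0$ then $yt^2=a^2$, forcing $y\in K^{*2}$, which contradicts $\overline{y}\neq 1$. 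Thus $b\neq 0$ and I may solve for $x$: in characteristic $2$, subtraction equals addition, so
\begin{linenomath}\[ x \;=\; \frac{yt^2 + a^2}{b^2}. \]\end{linenomath}

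Next I would show the containment $D_K\langle 1,\overline{x}\rangle\subseteq D_K\langle 1,\overline{y}\rangle$. Pick $\overline{z}\in D_K\langle 1,\overline{x}\rangle$, represented by $zs^2 = c^2 + xd^2$ for some $c,d\in K$, $s\in K^*$. Substituting the expression for $x$ and using the characteristic-2 identity $(c + da/b)^2 = c^2 + (da/b)^2$ gives
\begin{linenomath}\[ zs^2 \;=\; c^2 + \frac{d^2 a^2}{b^2} + y\cdot\frac{d^2 t^2}{b^2} \;=\; \left(c + \tfrac{da}{b}\right)^2 + y\left(\tfrac{dt}{b}\right)^2, \]\end{linenomath}
so $zs^2\in D_K\langle 1,y\rangle$ and hence $\overline{z}\in D_K\langle 1,\overline{y}\rangle$.

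Finally I would handle the reverse containment by noting that the hypothesis is symmetric in $\overline{x},\overline{y}$ in characteristic $2$: from $yt^2 = a^2 + xb^2$ (with $b\neq 0$ as shown) we get $xb^2 = a^2 + yt^2$ and so $x = (a/b)^2 + y(t/b)^2$, i.e.\ $\overline{x}\in D_K\langle 1,\overline{y}\rangle$. Applying the argument of the previous paragraph with the roles of $\overline{x}$ and $\overline{y}$ exchanged then yields $D_K\langle 1,\overline{y}\rangle\subseteq D_K\langle 1,\overline{x}\rangle$, completing the proof. There is no real obstacle here; the only thing to be careful about is the degenerate case $b=0$ in the initial relation, which is immediately ruled out by $y\notin K^{*2}$.
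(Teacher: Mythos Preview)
Your proof is correct. The key observations---that in characteristic $2$ one has $a^2+b^2=(a+b)^2$, that $b\neq 0$ is forced by $y\notin K^{*2}$, and that the relation $yt^2=a^2+xb^2$ is symmetric in $x,y$ after dividing by $b^2$---are exactly what is needed, and the substitution argument goes through cleanly.

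The present paper does not supply its own proof of this lemma; it is quoted verbatim as \cite[Lemma~7.6]{gm2016} and used as a black box in the proof of Theorem~\ref{local rational points}. Your argument is the natural direct one and is presumably what appears in \cite{gm2016}; in any case there is nothing to compare against here beyond noting that your proof stands on its own.
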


\begin{lemma}[{\cite[Lemma 7.10]{gm2016}}] \label{dimension formula}  If $K$ is a function field over a field $k$, $\operatorname{char}(k)=2$, then \begin{linenomath}\[[K:K^2] = 2^{\operatorname{trdeg}(K:k)}\cdot [k:k^2].\]\end{linenomath}
\end{lemma}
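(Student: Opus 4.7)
The plan is to prove the formula by induction on $n:=\operatorname{trdeg}(K:k)$, using the Frobenius map and the multiplicativity of field degrees to reduce everything to two separate computations: (i) the behaviour of $[K:K^2]$ under finite extensions, and (ii) the behaviour under adjoining a single transcendental.

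First I would record the following lemma: if $L\subseteq K$ is a finite field extension of characteristic $2$, then $[K:K^2]=[L:L^2]$. The proof is a degree chase in the diamond with vertices $L^2,L,K^2,K$. The Frobenius $x\mapsto x^2$ is an isomorphism $K\to K^2$ sending $L$ to $L^2$, so $[K^2:L^2]=[K:L]$. Computing $[K:L^2]$ two ways via the towers $L^2\subseteq L\subseteq K$ and $L^2\subseteq K^2\subseteq K$ gives
\[
[K:L]\cdot[L:L^2] \;=\; [K:K^2]\cdot[K^2:L^2] \;=\; [K:K^2]\cdot[K:L],
\]
whence the claim. (This is valid interpretation in cardinal arithmetic even when $[k:k^2]=\infty$.)

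Next I would handle the purely transcendental case. For a single transcendental $t$ over $k$, since $\operatorname{char}(k)=2$ the Frobenius gives $k(t)^2=k^2(t^2)$, so I would compute $[k(t):k^2(t^2)]$ through the tower
\[
k^2(t^2)\;\subseteq\;k(t^2)\;\subseteq\;k(t).
\]
The upper step has degree $2$ since $t$ satisfies $X^2-t^2=0$ over $k(t^2)$. For the lower step, setting $u=t^2$ (which is again transcendental over $k$), I claim $[k(u):k^2(u)]=[k:k^2]$: any $k^2$-basis $\{a_i\}$ of $k$ is a $k^2(u)$-basis of $k(u)$, because $k^2$-linear independence in $k$ lifts to $k^2(u)$-linear independence in $k(u)=k\otimes_{k^2}k^2(u)$ (the tensor product identification being a consequence of $u$ being transcendental over $k$, hence a fortiori over $k^2$), and these elements obviously span $k(u)$ over $k^2(u)$. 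Multiplicativity of degrees then gives $[k(t):k(t)^2]=2[k:k^2]$. Iterating, $[k(t_1,\dots,t_n):k(t_1,\dots,t_n)^2]=2^n[k:k^2]$.

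Finally, since $K$ is a function field over $k$, I choose a transcendence basis $t_1,\dots,t_n$ of $K$ over $k$; then $K$ is a finite extension of $L:=k(t_1,\dots,t_n)$. Applying the first lemma to $L\subseteq K$ and the transcendental computation to $L$ yields
\[
[K:K^2] \;=\; [L:L^2] \;=\; 2^n[k:k^2],
\]
as required. The only subtle point, and the step I would be most careful about, is the transfer of a $k^2$-basis of $k$ to a $k^2(u)$-basis of $k(u)$ in the purely transcendental step; everything else is formal manipulation of degrees via Frobenius.
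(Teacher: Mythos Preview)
Your proof is correct. The paper itself does not give a proof of this lemma---it merely cites \cite[Lemma 7.10]{gm2016}---but the draft material after \verb|\end{document}| contains a proof of the special case $\operatorname{trdeg}(K:k)=1$, $k=\mathbb{F}_{2^s}((x))$, and your argument is the natural generalization of that one: pick a transcendence basis, use the Frobenius isomorphism $K\cong K^2$ to get $[K^2:L^2]=[K:L]$ for $L=k(t_1,\dots,t_n)$, and compute $[L:L^2]$ directly via the tower $k^2(t^2)\subseteq k(t^2)\subseteq k(t)$. The only point you flesh out beyond the paper's sketch is the verification that a $k^2$-basis of $k$ remains a $k^2(u)$-basis of $k(u)$; your tensor-product justification is fine (and one can also argue directly that $k[u][S^{-1}]=k(u)$ for $S=k^2[u]\setminus\{0\}$ since $f\cdot f\in k^2[u]$ for any $f\in k[u]$).
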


We now proceed to the proof of the theorem:

\begin{proof} Suppose first that $k,\ell$ have characteristic different from $2$. Observe that if $k$ is dyadic then $\mu_{K,0} \ne \emptyset$ so $\mu_{L,0} \ne \emptyset$ and $\ell$ is also dyadic. Suppose in addition there exists $v \in \mu_{K,0}$ with $K_v =k$ and $w \in \mu_{L,0}$ with $L_w= \ell$. Let $v \leftrightarrow w$ be the bijection between $\mu_{K,0}$ and $\mu_{L,0}$ defined by Theorem \ref{local one variable}. We know that $K_v \sim L_w$ for any $v,w$ related in this way. Since $K_v$ and $L_w$ are dyadic local fields, this implies $[K_v:\hat{\mathbb{Q}}_2] = [L_w:\hat{\mathbb{Q}}_2]$ for any such $v,w$. We know also that $k \subseteq K_v$ and $\ell \subseteq L_w$. Choosing $v \leftrightarrow w$ so that $[K_v: \hat{\mathbb{Q}}_2] = [L_w:\hat{\mathbb{Q}}_2]$ is minimal, we see that $K_v=k$ and $L_w=\ell$.

Suppose now that $k$ is $p$-adic, $p\ne 2$. Then $\mu_{K,2} \cup \mu_{K,3} \ne \emptyset$ and consequently $\mu_{L,2} \cup \mu_{L,3} \ne \emptyset$, so $\ell$ is $p'$-adic, for some prime $p' \ne 2$. Then $k,\ell$ each has level $1$ or $2$. If $k$ has level $1$ then $K$ and consequently also $L$ has level $1$. Since $\ell$ is algebraically closed in $L$ this implies $\ell$ has level $1$. This proves $k$ and $\ell$ have the same level, so $k\sim \ell$.

Suppose now that $k,\ell \in \{ \mathbb{R}, \mathbb{C}\}$. If $k=\mathbb{C}$, $\ell = \mathbb{R}$, then $K$ has level $1$, $L$ has level $\ge 2$, contradicting $K \sim L$.

Suppose now that $k,\ell$ both have characteristic $2$. Since $[k:k^2] = [\ell: \ell^2] = 2$, Lemma \ref{dimension formula} implies $[K:K^2] = [L:L^2] = 4$ so,
applying \cite[Theorem 2.9]{bm}, we deduce that $K\cong L$. Say $k= \mathbb{F}_{2^s}((x))$, $\ell = \mathbb{F}_{2^{s'}}((x))$. Since the polynomial $f(t) = t^{2^s}+t$ splits in $k$, it splits in $K$, and hence also in $L$. Since $\ell$ is algebraically closed in $L$, $f(t)$ splits in $\ell$, and hence also in $\mathbb{F}_{2^{s'}}$. This proves $s | s'$. By symmetry, $s=s'$, so $k\cong \ell$.

It remains to show that if $\operatorname{char}(k) \ne 2$ and $\operatorname{char}(\ell) = 2$ then $K \not\sim L$. If $k = \mathbb{C}$ then $K$ has $u$-invariant $2$ so $D_K\langle 1,\overline{x}\rangle = K^*/K^{*2}$ for each $x\in K^*$. On the other hand, $[L:L^2] = 4$ and obviously $L+Lx$ is $2$ dimensional over $L^2$ for any $x\in L^*$, $x \notin L^{*2}$. It follows that $D_L\langle 1,\overline{x}\rangle \ne L^*/L^{*2}$ for any $x\in L^*$, $x \notin L^{*2}$. This implies $K \not\sim L$. Suppose now that $k= \mathbb{R}$. If $K$ is not formally real then $K$ has level $2$ and the above argument shows $K \not\sim L$. If $K$ is formally real then obviously $K \not\sim L$. 

Suppose now that $k$ is $p$-adic. Arguing as in \cite[Lemma 7.7]{gm2016} we will show that there exists $\overline{x},\overline{y} \in K^*/K^{*2}$, $\overline{x},\overline{y} \ne 1$ such that $\overline{y}\in D_K\langle 1,\overline{x}\rangle$, $D_K\langle 1,\overline{y}\rangle \not\subseteq D_K\langle 1,\overline{x}\rangle$. Fix inequivalent discrete rank one Abhyankar valuations $v,w$ on $K$ with $\operatorname{char}(K_v), \operatorname{char}(K_w)\ne 2$. Arguing as in the proof of Lemma \ref{well known two} (3) we see that there exist $x \in K^*$, $y \in {K^*}^2 + x{K^*}^2$ with $y \notin {K^*}^2 \cup x{K^*}^2$. It is thus possible to choose $x$ so that $v(x)=w(x)=1$ and $a_0,b_0$ so that $w(a_0)=w(b_0)=0$ and the image of $c = a_0^2+b_0^2$ in the residue field of $w$ is not a square. Define $y=a^2+x$, $z=b^2+y$ (so $z=a^2+b^2+x$) where $a,b$ are such that $v(a)>0$, $w(a-a_0)> 0$, $w(b-b_0)>0$. Then $v(y) = v(x) = 1$, so $x,y\notin K^{*2}$ and $w(a^2+b^2 -c) >0$ so $z= a^2+b^2+x \in c(1+M_w)$. Thus $x,y,z \in K^*$, $\overline{y} \in D_K\langle 1,\overline{x}\rangle$, $\overline{z} \in D_K\langle 1,\overline{y}\rangle$,  $\overline{x}\ne 1$, $\overline{y} \ne 1$. Let $T =  (1+M_w)K^{*2}$. Thus $T +xT = T\cup xT$ and $z \notin T \cup xT$, so $\overline{z} \notin D_K\langle 1,\overline{x}\rangle$. Applying this, along with Lemma \ref{characteristic 2}, we see that $K \not\sim L$ holds also in this case.
\end{proof}

%\begin{rem}
%We see that, in particular, \cite[Proposition 3.2]{kop1} is indeed true (even though the proof of \cite[Proposition 3.2]{kop1} given in \cite{kop1} is based on the erroneous argument in \cite[Theorem 1.3]{kop1}).
%\end{rem}


\begin{thebibliography}{99}
\bibitem{aej} J.K. Arason, R. Elman, W. Jacob, Rigid elements, valuations, and realization of Witt rings. \textit{J. Algebra} {\bf 110} (1987) 449--467.
%\bibitem{ap} J.K. Arason, A. Pfister, Beweis des Krullschen Durchschnittsatzes f\"ur den Wittring. \textit{Invent. Math.} {\bf12} (1971), 173--176.
\bibitem{bm} R. Baeza, R. Moresi, On the Witt-equivalence of fields of characteristic 2. \textit{J. Algebra} {\bf92} (1985), no. 2, 446--453.
\bibitem{C} J. Carpenter, Finiteness theorems for forms over global fields. \textit{Math. Z.} {\bf 209}  no. 1  (1992) 153--166.
\bibitem{cm} A.B. Carson, M. Marshall, Decomposition of Witt rings. \textit{Canad. J. Math.} {\bf 34} (1982) 1276--1302.
%\bibitem{ch} P.E. Conner, J. Hurrelbrink, Class number parity. \textit{Series in Pure Mathematics} {\bf 8}, World Scientific, Singapore, New Jersey, Hong Kong, 1988.
%\bibitem{ch1} P.E. Conner, J. Hurrelbrink, The 4-rank of $K_2(O)$. \textit{Canad. J. Math.} {\bf41} (1989), no. 5, 932--960.
\bibitem{cc-1} A. Connes, C. Consani, From monoids to hyperstructures: in search of an absolute arithmetic, in:
G. van Dijk (ed.) et al., \textit{Casimir force, Casimir operators and Riemann hypothesis. Mathematics for innovation in industry and science. Proceedings of the conference, Fukuoka, Japan, November 9--13, 2009}, 147-198, de Gruyter, Berlin, 2010.
\bibitem{cc-2} A. Connes, C. Consani, The hyperring of adele classes, \textit{J. Number Theory} {\bf 131} (2011), 159--194.
\bibitem{cc-3} A. Connes, C. Consani, Universal Thickening of the Field of Real Numbers, in: A. Alaca (ed.) et al., \textit{Advances in the Theory of Numbers. Proceedings of the Thirteenth Conference of the Canadian Number Theory Association}, 11--74, Fields Institute Communications {\bf 77}, Springer, New York, 2015.
\bibitem{cors03} P. Corsini, V. Leoreanu-Fotea, Applications of hyperstructure theory, \textit{Springer Science \& Business Media}, Dordrecht, 2003.
\bibitem{cor73} C. Cordes, The Witt group and equivalence of fields with respect to quadratic forms, \textit{J. Algebra} {\bf 26} (1973), 400--421.
\bibitem{dav07} B. Davvaz, V. Leoreanu-Fotea, Hyperring theory and applications, \textit{International Academic Press}, Palm Harbor, FL, 2007.
%\bibitem{dm} M. Dickmann, F. Miraglia, Special groups: Boolean-theoretic methods in the theory of quadratic forms. With appendixes A and B by Dickmann and A. Petrovich. \textit{Mem. Amer. Math. Soc.} {\bf145} (2000).
%\bibitem{e} I. Efrat, Valuations, orderings, and Milnor K-theory.
%\bibitem{efo} H. Eisenbeis, G. Frey, B. Ommerborn. Computation of the 2-rank of pure cubic fields. \textit{Math. Comp.} {\bf32} (1978), no. 142, 559--569.
%$\bibitem{ep} A.J. Engler, A. Prestel, Valued fields. \textit{Springer Monographs in Mathematics.} Springer-Verlag, Berlin, 2005.
%\bibitem{dmm} M. Dickmann, M. Marshall, F. Miraglia. Lattice-ordered reduced special groups. \textit{Ann. Pure Appl. Logic.} {\bf132} (2005) 27--49
%\bibitem{gj1} P. G\l adki, B. Jacob. On profinite spaces of orderings. To appear.
%\bibitem{gj2} P. G\l adki, B. Jacob. On quotients of the space of orderings of the field $\mathbb{Q}(x)$. To appear.
%\bibitem{M-2} M. Marshall, Classification of finite spaces of orderings. \textit{Canad. J. Math.} {\bf31} (1979) 320--330.
%\bibitem{gm} P. G\l adki, M. Marshall. The pp conjecture for spaces of orderings of rational conics. \textit{J. Algebra Appl.} {\bf 6} (2007) 245--257.
\bibitem{gla2010} P. G\l adki, Orderings of higher level in multifields and multirings, \textit{Ann. Math. Silesianae} {\bf 24} (2010), 15--25.
\bibitem{gm2012} P. G\l adki, M. Marshall, Orderings and signatures of higher level on multirings and hyperfields, J. K-Theory {\bf 10} (2012), 489--518.
\bibitem{gm2016} P. G\l adki, M. Marshall, Witt equivalence of function fields over global fields, \textit{Trans. Amer. Math. Soc.}, to appear
\bibitem{gh} N. Grenier-Boley, D.W. Hoffmann, Isomorphism criteria for Witt rings of real fields. With appendix by Claus Scheiderer.  \textit{Forum Math.} {\bf 25} (2013) 1--18.
%\bibitem{g2013} D. Grimm, Splitting fields of conics and sums of squares of rational functions, \textit{Manuscripta Math.} {\bf 141} (2013), 727--736.
\bibitem{h} D.K. Harrison, Witt rings. \textit{University of Kentucky Notes}, Lexington, Kentucky (1970).
%\bibitem{j} W. Jacob, Quadratic forms over dyadic valued fields. I. The graded Witt ring. \textit{Pacific J. Math.} {\bf126}  no. 1, (1987), 21--79.
%\bibitem{j1} W. Jacob, Quadratic forms over dyadic valued fields. II. Relative rigidity and Galois cohomology. \textit{J. Algebra} {\bf148} (1992), no. 1, 162--202.
\bibitem{ikr} Z. Izhakian, M. Knebusch, L. Rowen, Layered tropical mathematics, \textit{J. Algebra} {\bf 416} (2014), 200--273.
\bibitem{ir} Z. Izhakian, L. Rowen, Supertropical algebra, \textit{Adv. Math.} {\bf 225} (2010), 2222-2286.
\bibitem{j0} W. Jacob, On the structure of Pythagorean fields. \textit{J. Algebra} {\bf68} (1981), no. 2, 247--267.
\bibitem{jun-1} J. Jun, Algebraic geometry over semi-structures and hyper-structures of characteristic one. PhD
thesis, Johns Hopkins University, Baltimore, Maryland, USA, May 2015.
\bibitem{jun-2} J. Jun, Valuations of semirings. arXiv preprint arXiv:1503.01392, 2015.
\bibitem{jun-3} J. Jun, Hyperstructures of Affine Algebraic Group Schemes. arXiv preprint arXiv:1510.02979, 2015.
\bibitem{KR} J.L. Kleinstein, A. Rosenberg, Succinct and representational Witt rings. \textit{Pacific J. Math.} {\bf 86} (1980) 99--137.
%\bibitem{k} M. Knebusch, Generic splitting of quadratic forms I, \textit{Proc. London Math. Soc.} (3) 33 (1976) 65--93.
\bibitem{krw} M. Knebusch, A. Rosenberg, R. Ware, Structure of Witt rings and quotients of Abelian group rings. \textit{Amer. J. Math.} {\bf94} (1972), 119--155.
\bibitem{kop1} P. Koprowski, Local-global principle for Witt equivalence of function fields over global fields. \textit{Colloq. Math.} {\bf 91} (2002) 293--302.
\bibitem{kop2} P. Koprowski, Witt equivalence of algebraic function fields over real closed fields. \textit{Math. Z.} {\bf 242} (2002) 323--345.
\bibitem{kr0} M. Krasner, Approximation des corps valu\'es complets de caract\'eristique $p \ne 0$ par ceux de caract\'eristique 0. 1957 Colloque d'alg\`ebre sup\'erieure, tenu \`a Bruxelles du 19 au 22 d\'ecembre 1956 pp. 129--206 Centre Belge de Recherches Math\'ematiques \'Etablissements Ceuterick, Louvain; Librairie Gauthier-Villars, Paris.
%\bibitem{kr} M. Krasner, A class of hyperrings and hyperfields, \textit{Internat. J. Math. and Math. Sci.} {\bf 6} (1983) 307--312.
\bibitem{fvk} F.-V. Kuhlmann, On places of algebraic function fields in arbitrary characteristic. \textit{Advances in Math.} {\bf 188} (2004) 399--424.
\bibitem{kula81} M. Kula, Fields with non-trivial Kaplansky’s radical and finite square class number, \textit{Acta Arith.} {\bf 37} (1981), 411--418.
%\bibitem{kss}  M. Kula, L. Szczepanik, K. Szymiczek, Quadratic form schemes and quaternionic schemes. \textit{Fund. Math.} {\bf130} (1988), no. 3, 181--190.
%\bibitem{kula} M. Kula, Fields with prescribed quadratic form schemes. \textit{Math. Z.} {\bf 167}, (1979) 201--212.
\bibitem{kss}  M. Kula, L. Szczepanik, K. Szymiczek, Quadratic forms over formally real fields with eight square classes. \textit{Manuscripta Math.} {\bf 29}, (1979) 295--303.
%\bibitem{lm} D. Leep, M. Marshall, Isomorphisms and automorphisms of Witt rings. \textit{Canad. Math. Bull.} {\bf 31} (1988) 250--256.
\bibitem{l} T.-Y. Lam, Introduction to quadratic forms over fields. \textit{Graduate Studies in Mathematics} {\bf 67} American Mathematical Society, Providence, RI, 2005.
\bibitem{sl} S. Lang, Algebra. \textit{Addison-Wesley Publishing Co.}, Reading, Mass. 1965
\bibitem{litv11} G. Litvinov. Hypergroups and hypergroup algebras. arXiv preprint arXiv:1109.6596, 2011.
\bibitem{lor-1} O. Lorscheid, The geometry of blueprints: Part I: Algebraic background and scheme theory, \textit{Adv. Math.} {\bf 229} (2012), 1804--1846.
\bibitem{lor-2} O. Lorscheid, Scheme theoretic tropicalization. arXiv preprint arXiv:1508.07949, 2015.
%\bibitem{M-1} M. Marshall, Quotients and inverse limits of spaces of orderings. \textit{Canad. J. Math.} {\bf31} (1979) 504--616.
%\bibitem{M} M. Marshall, The Witt ring of a space of orderings.  \textit{Trans. Amer. Math. Soc.} {\bf258} (1980) 505--521.
%\bibitem{M0} M. Marshall, Spaces of orderings IV. \textit{Canad. J. Math.} {\bf32}, (1980) 603--627.
\bibitem{M1} M. Marshall, Abstract Witt rings, \textit{Queen's Papers in Pure and Applied Math.} {\bf57}, Queen's University, Kingston, Ontario (1980).
\bibitem{M2} M. Marshall, Real reduced multirings and multifields, \textit{J. Pure and Appl. Alg.} {\bf 205} (2006) 452--468.
%\bibitem{M2} M. Marshall, Spaces of orderings: systems of quadratic forms, local structure and saturation. \textit{Comm. in Alg.} {\bf12} (1984) 723--743.
%\bibitem{mas} Ch.G. Massouros, Methods of constructing hyperfields.
%\textit{Internat. J. Math. Math. Sci.} {\bf8} no. 4 (1985) 725--728.
\bibitem{mh} J. Milnor, D. Husemoller, Symmetric bilinear forms. \textit{Ergebnisse der Mathematik und ihrer Grenzgebiete}, Band {\bf73}. Springer-Verlag, New York-Heidelberg, 1973.
\bibitem{mi} J. Mittas, Sur une classe d'hypergroupes commutatifs. \textit{C. R. Acad. Sci. Paris S\'er. A-B} {\bf269} 1969 A485--A488.
%\bibitem{n} A. Nakassis, Recent results in hyperring and hyperfield theory.
%\textit{Internat. J. Math. Math. Sci.} {\bf11}  no. 2 (1988) 209--220.
\bibitem{Petal} R. Perlis, K. Szymiczek, P.E. Conner, R. Litherland, Matching Witts with global fields. \textit{Contemp. Math.} {\bf 155} (1994) 365--378.
%\bibitem{w} R. Ware, Valuation rings and rigid elements in fields. \textit{Canad. J. Math.} {\bf 33} (1981) 1338--1355.
%\bibitem{Szc} L. Szczepanik, Fields and quadratic form schemes with the index of radical not exceeding 16. \textit{Ann. Math. Sil.} {\bf 13}, (1985), 23--46.
%\bibitem{r} P. Ribenboim, Th\'eorie des valuations. Deuxi\`eme \'edition multigraphi\'e. S\'eminaire de Math\'ematiques Sup\'erieures, No. 9 (\'Et\'e, 1964) \textit{Les Presses de l'Universit\'e de Montr\'eal}, Montreal, Que. 1968.
\bibitem{fks} F.K. Schmidt, Mehrfach perfekte K\"orper. \textit{Math. Ann.} {\bf108} (1933), no. 1, 1--25.
%\bibitem{sch} U. Schneiders, Estimating the 2-rank of cubic fields by Selmer groups of elliptic curves. \textit{J. Number Theory} {\bf62} (1997), no. 2, 375--396.
%\bibitem{s} D. Shanks, The simplest cubic fields. \textit{Math. Comp.} {\bf28} (1974), 1137--1152.
\bibitem{sp1} T.A. Springer, Quadratic forms over fields with a discrete valuation. I. Equivalence classes of definite forms. \textit{Indag. Math.} {\bf17}, (1955) 352--362.
\bibitem{sp2} T.A. Springer, Quadratic forms over fields with a discrete valuation. II. \textit{Indag. Math.} {\bf18} (1956) 238--246.
\bibitem{szcz85} L. Szczepanik, Fields and quadratic form schemes with the index of radical not exceeding 16, \textit{Ann. Math. Silesianae} {\bf 1} (1985), 23--46.
\bibitem{szym75} K. Szymiczek, Quadratic forms over fields with finite square class number, \textit{Acta Arith.} {\bf 28} (1975), 195--221.
\bibitem{Sz1} K. Szymiczek, Matching Witts locally and globally. \textit{Math. Slovaca} {\bf 41} (1991) 315--330.
%\bibitem{Sz2} K. Szymiczek, Witt equivalence of global fields. \textit{Comm. Algebra} {\bf 19} (1991) 1125--1149.
%\bibitem{Sz0} K. Szymiczek, Witt equivalence of global fields. II. Relative quadratic extensions. \textit{Trans. Amer. Math. Soc.} {\bf 343} (1994) 277--303.
\bibitem{szym97} K. Szymiczek, Hilbert-symbol equivalence of number fields, \textit{Tatra Mount. Math. Publ.} {\bf 11} (1997), 7--16.
%\bibitem{W} H. Weber, Zu einem Problem von H.J. Kowalsky. \textit{Abh. Braunschweig. Wiss. Ges.} {\bf29} (1978) 127--134.
\bibitem{tsen1933} Ch. Tsen, Divisionsalgebren \"{u}ber Funktionenk\"{p}rpern, \textit{Nachr. Ges. Wiss. G\"{o}ttingen, Math.-Phys. Kl.}, {\bf I, No.44, II, No.48} (1933), 335--339.
\bibitem{v} P. V\'amos, Multiply maximally complete fields. \textit{J. London Math. Soc.} (2) {\bf12} (1975/76), no. 1, 103--111.
\bibitem{viro-1} O. Viro, Hyperfields for tropical geometry i. hyperfields and dequantization. arXiv preprint
arXiv:1006.3034, 2010.
\bibitem{viro-2} O. Viro, On basic concepts of tropical geometry, \textit{Proc. Steklov Inst. Math.} {\bf 273} (2011), 252--282.
\bibitem{wa} R. Ware, Valuation rings and rigid elements in fields. \textit{Canad. J. Math.} {\bf33} (1981), no. 6, 1338--1355.
%\bibitem{wash} L.C. Washington, Class numbers of the simplest cubic fields. \textit{Math. Comp.} {\bf48} (1987), no. 177, 371--384.
%\bibitem{Witt} E. Witt, Gegenbeispiel zum Normensatz. \textit{Math. Zeit.} {\bf 39} (1934) 12--28.
\bibitem{w} E. Witt, Theorie der quadratischen Formen in beliebigen K\"orpern. \textit{Journal f\"ur die reine und angewandte Mathematik} {\bf176} (1937) 31--44.
\bibitem{zies06} P.-H. Zieschang, Theory of association schemes, \textit{Springer Science \& Business Media}, Dordrecht, 2006.
\end{thebibliography}
\end{document}